\newcommand{\ga}{\alpha}
\newcommand{\gb}{\beta}
\renewcommand{\gg}{\gamma}
\newcommand{\gw}{\omega}
\newcommand{\gS}{\Sigma}
\newcommand{\gs}{\sigma}
\newcommand{\eps}{\varepsilon}
\newcommand{\coll}{\mathrm{Coll}}
\newcommand{\cantor}{2^\gw}
\newcommand{\baire}{\gw^\gw}
\newcommand{\supp}{\mathrm{supp}}
\newcommand{\dom}{\mathrm{dom}}
\newcommand{\rng}{\mathrm{rng}}
\newtheorem{theorem}{Theorem}[section]
\newtheorem{claim}[theorem]{Claim}
\newtheorem{corollary}[theorem]{Corollary}
\newtheorem{fact}[theorem]{Fact}
\newtheorem{proposition}[theorem]{Proposition}
\theoremstyle{definition}
\newtheorem{definition}[theorem]{Definition}
\newtheorem{example}[theorem]{Example}
\newtheorem{question}[theorem]{Question}
\title{Coloring closed Noetherian graphs\footnote{2000 AMS subject classification 03E35, 05C15, 14P99.}}
\author{
Jind{\v r}ich Zapletal\\
University of Florida\\
Academy of Sciences, Czech Republic}
\begin{document}
\maketitle

\begin{abstract}
If $\Gamma$ is a closed Noetherian graph on a $\gs$-compact Polish space with no infinite cliques, it is consistent with the choiceless set theory ZF+DC that $\Gamma$ is countably chromatic and there is no Vitali set.
\end{abstract} 

\section{Introduction}

Chromatic numbers of algebraic and $\gs$-algebraic graphs on Euclidean spaces have been studied extensively in both ZFC and choiceless ZF+DC context \cite{komjath:decomposition, komjath:list, schmerl:avoidable, z:distance}. In this paper, I show it consistent for many such graphs $\Gamma$ that ZF+DC holds, chromatic number of $\Gamma$ is countable, yet there is no Vitali set. The main feature of the graphs exploited here is omission of several simple subgraphs.

\begin{definition}
The \emph{half graph} is the graph on the vertex set $\gw\times 2$, connecting vertices $\langle n, 0\rangle$ and $\langle m, 1\rangle$ if $m<n$ and containing no other edges. A \emph{variation} of the half graph is a graph obtained from the half graph by making vertices $\langle n, 0\rangle$ for $n\in\gw$ either all pairwise connected or all pairwise disconnected, and similarly for vertices $\langle n, 1\rangle$ for $n\in\gw$. The \emph{three-quarter graph} is the graph on the same vertex set, connecting vertices $\langle n, 0\rangle$ and $\langle m, 1\rangle$ if $m\neq n$ and containing no other edges. Variations of the three-quarter graph are defined in the same way.
\end{definition}

\noindent Thus, the half graph and the three-quarter graph have four variations each.

\begin{definition}
A graph $\Gamma$ on a set $X$ is \emph{Noetherian} if it does not contain a variation of the half graph or the three quarter graph as a vertex-induced subgraph.
\end{definition}

\noindent Interesting examples of Noetherian graphs are included in Section~\ref{examplesection}. The precise preservation result obtained in this paper concerns Hamming graphs.

\begin{definition}
\label{hammingdefinition}
The \emph{infinite breadth Hamming graph} $\mathbb{H}_\gw$ is the graph on $\baire$ connecting two points if they differ in exactly one entry. The \emph{diagonal Hamming graph} $\mathbb{H}_{<\gw}$ is the restriction of $\mathbb{H}_\gw$ to the diagonal set $\prod_n(n+1)$.
\end{definition}

\noindent It is not difficult to see that the Hamming graphs are Noetherian.

\begin{theorem}
\label{maintheorem}
Suppose that $\Gamma$ is a closed Noetherian graph on a $\gs$-compact Polish space $X$.

\begin{enumerate}
\item If $\Gamma$ contains no infinite clique, then it is consistent relative to an inaccessible cardinal that ZF+DC holds, the chromatic number of $\Gamma$ is countable while that of $\mathbb{H}_\gw$ is not;
\item if there is a number $n\in\gw$ such that the graph $\Gamma$ contains no clique of cardinality $n$,  then it is consistent relative to an inaccessible cardinal that ZF+DC holds, the chromatic number of $\Gamma$ is countable while that of $\mathbb{H}_{<\gw}$ is not.
\end{enumerate}
\end{theorem}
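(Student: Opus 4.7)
The plan is to establish both parts by the Solovay-style absoluteness method for definable forcing in Zapletal's framework. Start with a ground model $V$ containing an inaccessible cardinal $\kappa$, perform a Levy collapse $\Coll(\gw,<\kappa)$, and in the resulting extension force further with a definable Suslin poset $P_\Gamma$ tailored to $\Gamma$. The target model is the inner model $V(\R)$ of the compound extension; ZF+DC holds there by the standard Solovay argument, so the substantive content is producing the desired countable $\Gamma$-coloring in $V(\R)$ while preserving the non-colorability of the Hamming graph.

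The first step is to define $P_\Gamma$. Conditions are finite systems $\langle (U_i,n_i) : i\in I\rangle$ where each $U_i\subseteq X$ is an open $\Gamma$-independent set and $n_i\in\gw$ is a proposed color; the generic produces $F_\gs$ sets $A_n=\bigcup\{U : (U,n)\in G_\Gamma\}$ intended to cover $X$ and witness countable chromatic number. The Noetherian hypothesis enters crucially: given a point $x\in X$ and a finite partial coloring, the absence of half-graph and three-quarter-graph variations ensures that one can always find an open $\Gamma$-independent neighborhood of $x$ compatible with the constraints, so a density argument places every real in some $A_n$. The no-infinite-clique assumption (case~1), or the no-$n$-clique assumption (case~2), controls the combinatorial ``width'' of independent neighborhoods available and underpins the density calculation.

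The bulk of the work lies in the preservation step: showing that $P_\Gamma$ does not add a countable coloring of $\mathbb{H}_\gw$ (respectively $\mathbb{H}_{<\gw}$). The standard route is to verify that $P_\Gamma$ is \emph{balanced} in the Larson--Zapletal sense, so that every condition has a master condition over $V$ pinning down the ``type'' of the generic independent sets. Given balance, a hypothetical $P_\Gamma$-name $\dot c$ for a countable $\mathbb{H}_\gw$-coloring is decided on a comeager set by a ground-model Borel coloring, contradicting the known fact that $\mathbb{H}_\gw$ carries no countable Borel chromatic number in the Solovay extension. For case~2 one uses instead that $\mathbb{H}_{<\gw}$ contains cliques of arbitrarily large finite size, which defeats any ground-model countable coloring extracted from the balance analysis.

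I expect the main obstacle to be the balance verification itself. The danger is that a generic sequence of $\Gamma$-independent $F_\gs$ sets could encode a coloring of the Hamming graph through projection or diagonal reading of the generic. Ruling this out requires a fusion-style amalgamation lemma: given two conditions whose generic extensions would force disagreement on $\dot c(x)$ for some Hamming vertex $x$, one must use precisely the absence of both the half-graph and the three-quarter-graph in $\Gamma$ to amalgamate them into a common lower bound, contradicting the supposed disagreement. This amalgamation, where the full Noetherian hypothesis is needed rather than either half-graph or three-quarter-graph omission alone, is the technical core of the argument.
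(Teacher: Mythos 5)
Your overall architecture (Levy collapse, then a definable Suslin coloring poset, then balance-based preservation) points in the right general direction, but two steps fail as stated. First, the poset itself: conditions built from \emph{open} $\Gamma$-independent sets $U_i$ need not exist for the very graphs the theorem is aimed at. Closedness of $\Gamma$ in this paper (the edge relation together with the diagonal is closed) does not yield independent open neighborhoods; for the tight $\sigma$-algebraic examples such as the distance graph of Example~\ref{distanceexample}, where the distance set accumulates at $0$, \emph{every} nonempty open set contains an edge, so your conditions cannot place any point into a color class and the density argument collapses. The paper's poset $P_\Gamma$ (Definition~\ref{coloringposetdefinition}) avoids this by taking countable partial colorings whose colors are basic open neighborhoods of the colored points, with independence demanded only from the previously colored, $\Gamma$-good, countable domain (via Proposition~\ref{goodproposition}); its $\sigma$-closure is also what makes DC and the balanced amalgamation go through.

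Second, and more fundamentally, the preservation step is not a consequence of balance. Balance alone does not prevent the forcing from adding a countable coloring of a Hamming graph: for instance $\mathbb{H}_{<\omega}$ is itself a closed Noetherian graph without infinite cliques on a compact space, its coloring poset is balanced, and it manifestly forces $\mathbb{H}_{<\omega}$ to be countably chromatic, so any argument that extracts a ground-model Borel coloring from balance alone proves too much. The paper's actual mechanism is the extra layer of \emph{control}: the finite-condition control poset $Q_\Gamma$ is Suslin-$\sigma$-liminf-centered when $\Gamma$ has no infinite clique (Theorem~\ref{liminfcenteredtheorem}, where the Noetherian graph topology is used) and Suslin-$\sigma$-Ramsey-centered when cliques have bounded size (Theorem~\ref{ramseycenteredtheorem}); a finite support iteration of $Q_\Gamma$ of length $\omega_1$ then gives a definable, suitably centered way of forcing a balanced pair below any condition (Propositions~\ref{liminfcontrolprop} and~\ref{ramseycontrolprop}), and preservation of the uncountable chromatic number of $\mathbb{H}_\omega$, respectively $\mathbb{H}_{<\omega}$, is then quoted as Fact~\ref{liminffact}, respectively Fact~\ref{rfact}, through the invariants of Borel $\sigma$-finite clique number and Borel $\sigma$-bounded chromatic number. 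Your proposal has no analogue of this centered-control layer, and in particular cannot account for the asymmetry between cases (1) and (2), which in the paper is exactly the difference between liminf-centeredness (needing no infinite cliques) and Ramsey-centeredness (needing a finite bound on clique size).
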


\noindent The inaccessible cardinal assumption is necessary only to make the proof fit the set-up of geometric set theory \cite{z:geometric} and probably can be dropped. Similarly, the $\gs$-compact assumption, satisfied in the important algebraic examples, is only necessary to evaluate the complexity of the coloring poset in Proposition~\ref{complexityproposition} and probably can be dropped.

In both cases, the conclusion excludes a Vitali set. To see this, let $\{\eps_{n,m}\colon n, m\in\gw\}$ be a collection of pairwise distinct positive rationals with a finite sum, and let $h\colon\baire\to\mathbb{R}$ be the function defined by $h(x)=\gS_n\eps_{n, x(n)}$. The function $h$ is a homomorphism from either of the Hamming graphs to the Vitali equivalence relation, and if $A\subset\mathbb{R}$ were a Vitali set, then the $h$-preimages of $A$ and its rational shifts would show that the chromatic numbers of the Hamming graphs are countable.

Theorem~\ref{maintheorem} is stated in a way which covers many special cases. To state a couple of more specific consequences, for $n\geq 1$ and a set $a$ of positive real numbers let $\Gamma_{na}$ be the graph on $\mathbb{R}^n$ consisting of pairs of points whose Euclidean distance belongs to $a$. 

\begin{corollary}
Let $n\geq 1$ be a number and let $a$ be a countable bounded set of positive reals with $0$ as the only accumulation point. Then it is consistent relative to an inaccessible cardinal that ZF+DC holds, the chromatic number of $\Gamma_{na}$ is countable, yet there is no Vitali set.
\end{corollary}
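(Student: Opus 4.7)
The plan is to apply Theorem~\ref{maintheorem}(1) to the graph $\Gamma_{na}$ on $\R^n$, producing a model of ZF+DC in which $\Gamma_{na}$ has countable chromatic number while $\mathbb{H}_\gw$ does not; the remark following Theorem~\ref{maintheorem} then rules out a Vitali set. Four hypotheses must be checked. The ambient space $\R^n$ is $\sigma$-compact Polish, and $\Gamma_{na}$ is closed off the diagonal because the Euclidean distance function is continuous and $a$ is closed in $(0,\infty)$, its unique accumulation point $0$ lying outside that interval.

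For absence of an infinite clique $\{p_i\}_{i\in\gw}$, boundedness of $a$ forces the clique to be bounded, so along a subsequence $p_i\to p^*$. For $p_i\neq p^*$ the distance $s_i:=d(p_i,p^*)$ is a positive limit of values in $a$, hence itself an isolated point of $a$; isolation then yields $d(p_i,p_j)=s_i$ for all sufficiently large $j$. Each $p_j$ thus eventually lies on every sphere $\{y:\|y-p_i\|=s_i\}$ for $i$ in any fixed finite index set. Choosing $n$ indices with affinely independent base points produces a $0$-dimensional sphere intersection, contradicting the infinitude of the clique; if no such choice is possible the clique sits in a lower-dimensional affine subspace and the argument recurses, bottoming out in $\R^1$ where the accumulation structure of $a$ excludes infinite cliques by direct arithmetic analysis of sums and differences.

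The main obstacle is verifying the Noetherian property. Suppose a variation of the half or three-quarter graph embeds in $\Gamma_{na}$ with vertex sets $\{x_n\},\{y_m\}$. A variation in which one side forms a clique would produce an infinite clique, already excluded, so both sides are independent and the embedded graph is the plain half or three-quarter graph. Pass to subsequences with $x_n\to x^*$, $y_m\to y^*$, and apply Ramsey's theorem to the countable edge-coloring by distance value in $a$ to further assume that every edge has the same length $r\in a$. If $r>0$, isolation of $r$ together with closedness of $\Gamma_{na}$ places $y_m$ on the sphere of radius $r$ around each connected $x_n$; the perpendicular-bisector identity $(y_m-x^*)\cdot(x_n-x^*)=\tfrac{1}{2}\|x_n-x^*\|^2$ gives $(y_m-x^*)\cdot e_n\to 0$ where $e_n$ is the unit direction of $x_n-x^*$, so $y_m-x^*$ is asymptotically perpendicular to the cluster directions of the $e_n$, with a symmetric restriction on $x_n-y^*$. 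The combinatorial asymmetry of the half graph (edges only for $m<n$) and the missing diagonal of the three-quarter graph cannot be realized consistently with this rigid configuration, and the case $r=0$ is handled by rescaling around the common limit $x^*=y^*$. The most delicate step I anticipate is maintaining this rigidity argument in high ambient dimension, where the first layer of perpendicular-bisector constraints still leaves an infinite sphere intersection and finer features of the edge asymmetry must be invoked.
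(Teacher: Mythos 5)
Your top-level plan (apply Theorem~\ref{maintheorem}(1) to $\Gamma_{na}$ and use the homomorphism remark after the theorem to exclude a Vitali set) is exactly the paper's route, and your verifications of closedness and of the absence of an infinite clique are sound; the clique argument (convergent subsequence, isolation of every positive element of $a$ forcing eventual exact distances, then finiteness of intersections of spheres with affinely independent centers, with a recursion to the affine span in the degenerate case) is a legitimate elementary alternative to the paper's argument, which instead derives both the clique bound and the Noetherian property from the observation that $\Gamma_{na}$ is tight $\sigma$-algebraic and from the Hilbert basis theorem (Example~\ref{distanceexample} and the theorem preceding it). The genuine gap is in your verification of the Noetherian property, which is the crux of the corollary. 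First, the appeal to Ramsey's theorem is not valid as stated: the edge-colouring by distance values in $a$ has infinitely many colours, and the infinite Ramsey theorem fails for infinitely many colours; what actually works is the stabilization argument (for fixed $m$, the distances $d(x_n,y_m)$ lie in $a$ and converge, hence are eventually constant unless the limit is $0$), and this produces a single common edge length $r$ only when the two limit points $x^*,y^*$ are distinct. Second, when $x^*=y^*$ the edge lengths tend to $0$ and there is no $r\in a$ at all; ``rescaling around the common limit'' is not a defined step -- rescaling destroys membership of distances in the fixed set $a$ and with it the isolation property you rely on, so this case is simply not handled. Third, even in the case $x^*\neq y^*$ you concede that the sphere/perpendicular-bisector constraints leave an infinite intersection when the relevant points fail to affinely span (e.g.\ when the $y$-side spans an affine subspace of codimension at least $2$), and that ``finer features of the edge asymmetry must be invoked''; that is precisely the part that has to be proved, since the first layer of equidistance constraints is genuinely insufficient in degenerate configurations and one must bring in the non-edge conditions. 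So the Noetherian property is asserted but not established.

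The efficient repair is the paper's: write $\Gamma_{na}=\bigcup_m\Gamma_m$ where $\Gamma_m$ is the (algebraic) graph of pairs at the single distance $r_m$, for an enumeration of $a$ with $r_m\to 0$; this exhibits $\Gamma_{na}$ as tight $\sigma$-algebraic, and the Noetherian property then follows from the Hilbert basis theorem: given a copy of a variation of the half or three-quarter graph, thin the vertex set to a discrete one, attach to each vertex $x_n$ the finitely many distances in $a$ it realizes to the vertices it is joined to, and observe that the resulting descending sequence of intersections of algebraic sets separates every finite stage from the full intersection, contradicting the descending chain condition for algebraic sets. This uniform argument replaces the case analysis over limit configurations that your geometric approach would have to carry out by hand. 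If you prefer to keep the elementary route, you must at minimum (a) replace the Ramsey step by the stabilization argument, (b) treat the case $x^*=y^*$ (here the equidistance relations place all but finitely many $y$'s on the perpendicular bisector of $[x^*,x_k]$, whose affine span then contains $x^*$, giving a contradiction), and (c) in the case $x^*\neq y^*$ with degenerate affine span, actually exploit the non-edges, none of which is routine.
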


\noindent This stands in contradistinction with the case in which $a$ is the set of all positive rationals, where the countable chromatic number of $\Gamma_{na}$ yields a Vitali set by the definitions. To prove the corollary, first use Example~\ref{distanceexample} to show that the graph $\Gamma_{na}$ is closed and Noetherian. Theorem~\ref{maintheorem}(1) then proves consistency of ZF+DC plus the chromatic number of $\Gamma_{na}$ is countable while there is no Vitali set.

For the next corollary, let $\langle \eps_n\colon n\in\gw\rangle$ be a sequence of positive real numbers such that $\gS_n (n+1)\eps_n<\infty$. Let $a=\{m\eps_n\colon n\in\gw, m\in n+1\}$.

\begin{corollary}
Let $n\geq 1$ and let $\Gamma$ be an arbitrary algebraic graph on a Euclidean space, without a perfect clique. It is consistent relative to an inaccessible cardinal that ZF+DC holds, the chromatic number of $\Gamma$ is countable, yet the chromatic number of $\Gamma_{1a}$ is uncountable.
\end{corollary}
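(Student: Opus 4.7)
The plan is to apply Theorem~\ref{maintheorem}(2) to $\Gamma$ and then to exploit a natural graph homomorphism from $\mathbb{H}_{<\gw}$ into $\Gamma_{1a}$ in order to transfer uncountability of the chromatic number.

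First, I would verify that $\Gamma$ satisfies the hypotheses of Theorem~\ref{maintheorem}(2). Algebraic graphs on a Euclidean space are closed and live on a $\gs$-compact Polish space, and the examples in Section~\ref{examplesection} include the observation that algebraic graphs are automatically Noetherian. The assumption of no perfect clique, for an algebraic graph, forces a finite uniform bound on the clique size: if cliques of every finite cardinality existed, a compactness argument applied to the Zariski-closed clique set in the appropriate symmetric power of the ambient Euclidean space would produce a perfect clique. With this in hand, Theorem~\ref{maintheorem}(2) applies to $\Gamma$ and produces a model of ZF+DC in which the chromatic number of $\Gamma$ is countable while that of $\mathbb{H}_{<\gw}$ is not.

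Next I would set up a graph homomorphism $h\colon\prod_n(n+1)\to\mathbb{R}$ from $\mathbb{H}_{<\gw}$ to $\Gamma_{1a}$ by the summation formula $h(x)=\gS_n\eps_n\cdot x(n)$, in direct analogy with the Vitali construction appearing just after Theorem~\ref{maintheorem}. The series converges since $x(n)\leq n$ and $\gS_n(n+1)\eps_n<\infty$ by hypothesis. If two points $x,y\in\prod_n(n+1)$ differ in exactly one coordinate $n$, then $|h(x)-h(y)|=|x(n)-y(n)|\eps_n$ lies in the set $a$, because $|x(n)-y(n)|$ is a positive integer at most $n$. Consequently, any countable coloring of $\Gamma_{1a}$ pulls back along $h$ to a countable coloring of $\mathbb{H}_{<\gw}$, so the uncountability of the chromatic number of $\mathbb{H}_{<\gw}$ in the model above transfers to $\Gamma_{1a}$.

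The main obstacle I anticipate is the finite clique bound for algebraic graphs without a perfect clique; closedness, Noetherianity, and $\gs$-compactness are routine, and the homomorphism argument is mere bookkeeping. Once the clique bound is justified, the corollary follows immediately from Theorem~\ref{maintheorem}(2) combined with the pullback argument through $h$.
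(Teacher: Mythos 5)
Your overall route coincides with the paper's: verify the hypotheses of Theorem~\ref{maintheorem}(2) for $\Gamma$, and transfer the uncountable chromatic number of $\mathbb{H}_{<\gw}$ to $\Gamma_{1a}$ through the homomorphism $h(x)=\gS_n x(n)\cdot\eps_n$. Your treatment of $h$ is correct and is exactly the paper's: convergence follows from $\gS_n(n+1)\eps_n<\infty$, and if $x,y$ differ in exactly the one coordinate $n$ then $|h(x)-h(y)|=|x(n)-y(n)|\eps_n\in a$ since $|x(n)-y(n)|$ is a positive integer at most $n$, so a countable coloring of $\Gamma_{1a}$ pulls back to one of $\mathbb{H}_{<\gw}$.

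The gap is precisely the step you flag as the main obstacle: the finite bound on clique size. Your sketch---``a compactness argument applied to the Zariski-closed clique set in the appropriate symmetric power''---does not work as stated. The set of $n$-cliques is cut out by polynomial equations together with the open conditions $x_i\neq x_j$, so it is only constructible/semi-algebraic, not Zariski closed; the ambient Euclidean space is not compact, so there is nothing for a naive compactness argument to bite on, and limits of larger and larger finite cliques can degenerate (points escape to infinity or collapse). More fundamentally, the implication ``cliques of every finite cardinality imply an infinite (let alone perfect) clique'' is simply false for closed graphs in general: take complete graphs on a discrete family of larger and larger finite sets. So any correct argument must genuinely use algebraicity, and the paper supplies exactly this as Theorem~\ref{boundtheorem}: for an algebraic graph, either there is a perfect clique, or there is $m\in\gw$ bounding all clique cardinalities. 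Its proof is not soft---it first produces a uniform $k$ such that every finite $a\subset X$ has $b\subseteq a$ with $|b|\leq k$ and $\Gamma(b)=\Gamma(a)$ (a tree argument through irreducible components with dimension and degree bounds), and then applies the uniform finiteness of vertical sections of semi-algebraic families to the sets $\heartsuit(b)$. Replace your compactness sketch by a citation of Theorem~\ref{boundtheorem} (this is exactly what the paper does), and also note that the same theorem gives the Noetherian property of $\Gamma$; with those substitutions your argument is complete and identical to the paper's.
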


\noindent This shows that it is possible to color algebraic graphs in general without coloring even quite simple instances of the distance graphs which are $\gs$-algebraic. To prove the corollary, first observe that the Hamming graph $\mathbb{H}_{<\gw}$ can be homomorphically embedded into $\Gamma_{1a}$ by a function $h\colon\prod_n(n+1)\to\mathbb{R}$ defined by $h(x)=\gS_n x(n)\cdot\eps_n$. Now, use Theorem~\ref{boundtheorem} to see that there is a finite bound on the cardinality of $\Gamma$-cliques. Theorem~\ref{maintheorem}(2) then shows the consistency of ZF+DC plus the chromatic number of $\Gamma$ is countable while that of $\mathbb{H}_{<\gw}$ is not--which by the existence of the homomorphism means that the chromatic number of $\Gamma_{1a}$ is uncountable as well.

The techniques of this paper provide much more detailed information about the models obtained than what fits into the statements of the main theorems. However, a number of questions remain open. An affirmative answer to the following question would be a natural strengthening of the main results of this paper.

\begin{question}
Let $\Gamma$ be a closed Noetherian graph on a $\gs$-compact Polish space without an infinite clique. Is it consistent with ZF+DC that $\Gamma$ is countably chromatic, yet there is no linear ordering of the set of all Vitali classes?
\end{question}

To describe the architecture of the paper, in Section~\ref{observationsection} I provide basic insights into closed Noetherian graphs without infinite cliques. In particular, they carry a canonical Noetherian topology, and they are countably chromatic in ZFC. Section~\ref{examplesection} provides examples associated with Euclidean spaces, which are the most interesting from historical point of view. Many quite different examples will doubtless be found in the future. Section~\ref{balancedsection} analyzes a canonical coloring poset used over the Solovay model to add a coloring of closed Noetherian graphs without infinite cliques. Section~\ref{controlsection} discusses the main technical tool to control the generic extension of the Solovay model, namely the finite condition coloring poset known from the work of Todorcevic and others. Finally, Section~\ref{wrapupsection} wraps up the proofs using the technology of \cite[Chapter 11]{z:geometric}.

The notation of the paper follows the set theoretic standard of \cite{jech:newset}. A graph $\Gamma$ on a Polish space $X$ is closed if the relation $\{\langle x, y\rangle\in X^2\colon x\mathrel\Gamma y$ or $x=y\}$ is a closed subset of $X^2$. A topology $\mathcal{T}$ on a set $X$ is Noetherian if there are no infinite sequences of $\mathcal{T}$-closed sets strictly decreasing with respect to inclusion, or equivalently, the intersection of any collection of $\mathcal{T}$-closed sets is equal to the intersection of a finite subcollection. The Vitali equivalence relation on $\mathbb{R}$ connects points $x, y$ if $x-y$ is a rational number; a Vitali set is a subset of $\mathbb{R}$ which intersects each class of the Vitali equivalence relation in exactly one point.

\section{Initial observations}
\label{observationsection}

This section contains basic definitions and facts about Noetherian graphs without infinite cliques. In particular, they carry a canonical Noetherian topology which will be used repeatedly in the paper. I will use the following notation regarding graph neighborhoods throughout.

\begin{definition}
Let $\Gamma$ be a graph on a set $X$.

\begin{enumerate}
\item If $x\in X$ is a vertex, the symbol $\Gamma(x)$ denotes the set $\{y\in X\colon y=x\lor y\mathrel\Gamma x\}$;
\item for a finite set $a\subset X$, $\Gamma(a)$ is the set $\bigcap_{x\in a}\Gamma(x)$;
\item the \emph{$\Gamma$-topology} or the \emph{graph topology} is the smallest topology on $X$ in which all sets $\Gamma(x)$ for $x\in X$ are closed.
\end{enumerate}
\end{definition}

\noindent As is suggested by the terminology, the graph topology of Noetherian graphs is Noetherian. This feature will be used throughout the paper. 

\begin{theorem}
\label{noetheriantheorem}
Let $\Gamma$ be a graph on a set $X$. The following are equivalent:

\begin{enumerate}
\item $\Gamma$ is Noetherian;
\item the $\Gamma$-topology is Noetherian.
\end{enumerate}
\end{theorem}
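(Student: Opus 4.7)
My plan is to prove both implications by contrapositive. For $(2)\Rightarrow(1)$, I would assume that $\Gamma$ contains a variation of the half graph or the three-quarter graph, on vertex set $\{\langle n,k\rangle:n\in\gw,\,k\in 2\}$, and exhibit a strictly decreasing sequence of $\Gamma$-closed sets. The sequence $C_n=\bigcap_{i\le n}\Gamma(\langle i,1\rangle)$ works in all eight cases at once: $\langle n{+}1,0\rangle$ is adjacent to $\langle i,1\rangle$ for every $i\le n$ (since $i<n{+}1$ in the half-graph case, and $i\ne n{+}1$ in the three-quarter case), so $\langle n{+}1,0\rangle\in C_n$; but $\langle n{+}1,0\rangle$ is distinct from and nonadjacent to $\langle n{+}1,1\rangle$, so $\langle n{+}1,0\rangle\notin C_{n+1}$. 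Hence the $\Gamma$-topology is not Noetherian.

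For $(1)\Rightarrow(2)$, I would suppose the $\Gamma$-topology admits a strictly decreasing chain $C_0\supsetneq C_1\supsetneq\cdots$ of closed sets and pick $x_n\in C_n\setminus C_{n+1}$. A $\Gamma$-closed set is an arbitrary intersection of finite unions of basic closed sets $\Gamma(y)$, so I can fix for each $n$ a finite $a_n\subset X$ with $C_{n+1}\subseteq\bigcup_{y\in a_n}\Gamma(y)$ and $x_n\notin\bigcup_{y\in a_n}\Gamma(y)$. A standard recursive pigeonhole --- at stage $i$ selecting $y_i\in a_{n_i}$ so that $\{m>n_i:x_m\in\Gamma(y_i)\}$ remains infinite, and passing to that tail --- would yield, after reindexing, sequences $\langle x_n\rangle$ and $\langle y_n\rangle$ satisfying
\[x_n\notin\Gamma(y_n)\quad\text{and}\quad x_m\in\Gamma(y_n)\ \text{for all}\ m>n.\]
The $y_n$ are then automatically pairwise distinct (else $x_j\in\Gamma(y_i)=\Gamma(y_j)$ contradicts $x_j\notin\Gamma(y_j)$), and the $x_n$ are pairwise distinct by construction; a further thinning using the injective partial function recording coincidences $y_i=x_j$ produces an infinite $H\subseteq\gw$ on which $\{x_n:n\in H\}$ and $\{y_n:n\in H\}$ are disjoint.

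With this clean bipartite picture in hand, I would finish by three applications of Ramsey's theorem on pairs from $H$: color each $\{i,j\}$ with $i<j$ by whether $x_i\mathrel\Gamma y_j$, by whether $x_i\mathrel\Gamma x_j$, and by whether $y_i\mathrel\Gamma y_j$. On a common infinite homogeneous subset, the first coloring's two outcomes give either $x_i\mathrel\Gamma y_j\Longleftrightarrow i\ne j$ (combined with the forced edges $x_m\mathrel\Gamma y_n$ for $m>n$, this is the three-quarter pattern) or $x_i\mathrel\Gamma y_j\Longleftrightarrow i>j$ (the half-graph pattern); the remaining two Ramsey outcomes decide whether $\{x_n\}$ and $\{y_n\}$ are cliques or independent sets, selecting one of the four variations in each case and witnessing non-Noetherianity of $\Gamma$. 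The step I expect to be most delicate is the passage from the finite witness sets $a_n$ to a single witness $y_n$, together with the clean disjointification of the $x$'s and $y$'s; once those are in place, the finite colorings on $[\gw]^2$ are dispatched by standard Ramsey theory.
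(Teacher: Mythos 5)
Your proposal is correct, and its combinatorial heart --- replacing the finite witness sets $a_n$ by single witnesses $y_n$ via a pigeonhole/thinning recursion and then running Ramsey's theorem three times to land on one of the eight forbidden configurations --- is the same engine as the paper's Claim 2.3. Where you differ is in the decomposition of the implication from (1) to (2): you argue purely in the contrapositive, starting from an arbitrary strictly decreasing chain of $\Gamma$-closed sets and using the standard normal form of closed sets generated by a subbasis (arbitrary intersections of finite unions of sets $\Gamma(y)$) to extract the separating finite sets $a_n$; the paper instead argues positively, first showing there is no strictly decreasing chain of sets of the form $\Gamma(a)$ (Claim 2.3), then none among finite unions of such sets (Claim 2.4), and finally observing that this family is closed under arbitrary intersections and hence coincides with the $\Gamma$-closed sets. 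Your route is somewhat more direct and avoids the second recursion, but the paper's route buys an explicit byproduct that is reused repeatedly later: every $\Gamma$-closed set is a finite union of sets $\Gamma(a)$ with $a$ finite, which is exactly what Theorem~\ref{definabilitytheorem}, Proposition~\ref{goodproposition} and Theorem~\ref{verysuslintheorem} invoke (in your setup this normal form still follows once Noetherianity is established, but it is not exhibited). Two small points worth making explicit if you write this up: the relation you force is $x_m\in\Gamma(y_n)$, i.e.\ adjacent \emph{or equal}, so the disjointification of $\{x_n\}$ and $\{y_n\}$ (your partial-injection thinning, which works since the coincidence graph has degree at most two and $x_n\ne y_n$ because $x_n\notin\Gamma(y_n)\ni y_n$) is what upgrades the forced relations to genuine edges; and your choice of the chain $C_n=\bigcap_{i\le n}\Gamma(\langle i,1\rangle)$ in the easy direction is a nice touch, since it treats the half graph, the three-quarter graph, and all their variations uniformly.
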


\begin{proof}
Failure of (1) immediately implies the failure of (2). Let $\pi\colon\gw\times 2\to X$ be an isomorphism of a variation of the half graph to a subgraph of $X$. Let $a_n=\{\pi(m, 0)\colon m\in n\}$, and observe that the sets $\Gamma(a_n)$ all contain all but finitely many points in the set $\{\pi(m, 1)\colon m\in\gw\}$ while their intersection contains none of these points. An identical argument works for an embedding of a variation of the three-quarter graph.

Now, suppose that (1) holds and work to establish (2).  

\begin{claim}
\label{claim1}
There is no sequence $\langle a_n\colon n\in\gw\rangle$ of finite subsets of $X$ such that the sets $\Gamma(a_n)$ strictly decrease with $n$. 
\end{claim}

\begin{proof}
Suppose towards a contradiction that there is such a sequence. Without loss assume that the sets $a_n$ increase with $n\in\gw$, and, erasing needless entries if necessary, assume that there are points $x_n$ such that $a_{n+1}=a_n\cup\{x_n\}$. Note that the points $x_n$ for $n\in\gw$ must be pairwise distinct. For each number $n\in\gw$, let $y_n\in \Gamma(a_n)\setminus\Gamma(x_n)$ be an arbitrary point; note that these points also have to be pairwise distinct and in addition $x_n\neq y_n$. Repeatedly using the Ramsey theorem, find an infinite set $b\subset\gw$ such that 

\begin{itemize}
\item each of the sets $\{x_n\colon n\in b\}$ and $\{y_n\colon n\in b\}$ is either a $\Gamma$-clique or a $\Gamma$-anticlique, and they are disjoint;
\item either for every pair $m<n$ of numbers in $b$, $y_m\mathrel\Gamma x_n$ holds, or for every pair $m<n$ of numbers in $b$, $y_m\mathrel\Gamma x_n$ fails.
\end{itemize}

\noindent Now, let $\pi\colon \gw\to b$ be the increasing enumeration. Define the injection $h\colon\gw\times 2\to X$ by $h(n, 0)=y_{\pi(n)}$ and $h(n, 1)=x_{\pi(n)}$.
If the ``either'' case  in the second item above prevails, then $h$ is an isomorphism of a variation of the three-quarter graph to a subgraph of $\Gamma$. If the ``or'' case prevails, then $h$ is an isomorphism of a variation the half graph to a subgraph of $\Gamma$. Both cases are ruled out by (1). A contradiction.
\end{proof}

\noindent Now, consider the collection $\mathcal{T}$ of all finite unions of sets $\Gamma(a)$ as $a$ ranges over all finite subsets of $X$. 

\begin{claim}
\label{claim2}
There are no infinite strictly descending sequences of sets in $\mathcal{T}$.
\end{claim}

\begin{proof}
This is a standard argument. Towards a contradiction, assume that $\{C_n\colon n\in\gw\}$ is a inclusion-decreasing sequence of sets in $\mathcal{T}$ which does not stabilize. By recursion on $m\in\gw$ build numbers $n_m$ and finite sets $a_m\subset X$ such that

\begin{itemize}
\item $n_0\in n_1\in\dots$;
\item $\Gamma(a_m)\subseteq C_{n_m}$ and the sequence $\{C_n\cap\Gamma(a_m)\colon n\in\gw\}$ does not stabilize;
\item $\Gamma(a_{m+1})$ is a strict subset of $\Gamma(a_m)$.
\end{itemize}

\noindent The base step is subsumed in the recursion step. For the recursion step, suppose that $n_m, a_m$ have been constructed. Since the sequence $\{C_n\cap\Gamma(a_m)\colon n\in\gw\}$ does not stabilize, there must be a number $n_{m+1}>n_m$ such that $C_n\cap\Gamma(a_n)\neq\Gamma(a_n)$. Since the set $C_{n_{m+1}}$ is a finite union of sets of the form $\Gamma(a)$, there must be a finite set $a\subset X$ such that $\Gamma(a)\subseteq C_{n_{m+1}}$ and the sequence $\{C_n\cap\Gamma(a_m)\cap\Gamma(a)\colon n\in\gw\}$ does not stabilize. Set $a_{m+1}=a_n\cup a$ and observe that the recursion step has been successfully performed.

In the end, the sets $\Gamma(a_m)$ contradict the conclusion of Claim~\ref{claim1}. Thus, $\mathcal{T}$ contains no infinite strictly decreasing sequences of sets.
\end{proof}

Now, observe that $\mathcal{T}$ is closed under finite intersections and unions by its definition. It is also closed under arbitrary intersections: the nonexistence of infinite strictly decreasing sequences of sets in $\mathcal{T}$ implies that an intersection of arbitrary collection of sets in $\mathcal{T}$ is equal to an intersection of a finite subcollection. Thus, $\mathcal{T}$ is exactly the collection of closed sets in the $\Gamma$-topology. The Noetherian property of the topology follows immediately from Claim~\ref{claim2}.
\end{proof}

\noindent The next result quantifies the complexity of the graph topology from descriptive point of view. 

\begin{theorem}
\label{definabilitytheorem}
Suppose that $\Gamma$ is a closed Noetherian graph on a $\gs$-compact Polish space $X$. Then the $\Gamma$-topology is analytic.
\end{theorem}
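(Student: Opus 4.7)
The plan is to exhibit an analytic parametrization for the closed sets of the $\Gamma$-topology $\mathcal{T}$. Since $X$ is $\gs$-compact Polish, the hyperspace $F(X)$ of closed subsets of $X$ is Polish in the Fell topology and standard Borel in the Effros Borel structure. By Claim~\ref{claim2}, every $\mathcal{T}$-closed subset of $X$ is a finite union $\bigcup_{i<k}\Gamma(a_i)$ for some $k\in\gw$ and finite sets $a_0,\ldots,a_{k-1}\subseteq X$. Thus the family $\mathcal{T}$ of closed sets coincides with the range of the map
$$\Phi\colon \bigsqcup_{k\in\gw}([X]^{<\gw})^k \to F(X),\qquad (a_0,\ldots,a_{k-1})\mapsto \bigcup_{i<k}\bigcap_{x\in a_i}\Gamma(x).$$
The domain of $\Phi$ is a standard Borel space, so once $\Phi$ is shown to be Borel, its image will be an analytic subset of $F(X)$, which is the meaning of ``$\mathcal{T}$ is analytic'' I would adopt.

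To verify Borelness of $\Phi$ I would proceed in three steps. First, the section map $x\mapsto\Gamma(x)$ is Borel from $X$ into $F(X)$: the relation $R=\{\langle x,y\rangle\in X^2: y=x\text{ or }x\mathrel\Gamma y\}$ is closed by hypothesis, and in a $\gs$-compact Polish space the map sending a parameter to the corresponding section of a closed subset of $X^2$ is upper semicontinuous into $(F(X),\text{Fell})$, hence Borel. Second, finite intersection $(F_0,\ldots,F_{n-1})\mapsto\bigcap_i F_i$ is a Borel operation on $F(X)$ in the Effros Borel structure. Third, finite union is even continuous in the Fell topology on $F(X)$. Composing these three ingredients yields that $\Phi$ is Borel, and the analytic image theorem then gives that $\mathrm{rng}(\Phi)\subseteq F(X)$ is analytic.

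The main obstacle is pinning down the Borelness of the section map and of the finite-intersection operation in the chosen hyperspace structure. This is routine but requires appeal to standard hyperspace theory: in the Fell topology the basic open sets $\{F\in F(X):F\cap K=\emptyset\}$ for $K$ compact and $\{F\in F(X):F\cap U\neq\emptyset\}$ for $U$ open suffice to verify the measurability claims, exploiting $\gs$-compactness of $X$ to approximate closed sets by compact witnesses. Once the three operations are confirmed Borel, the parametrization $\Phi$ witnesses analyticity of the $\Gamma$-topology in the only sense that will be needed later, namely that quantification over $\mathcal{T}$-closed sets reduces to analytic quantification over a Polish parameter space.
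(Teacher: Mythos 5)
Your proposal is correct and follows essentially the same route as the paper: both parametrize the $\mathcal{T}$-closed sets as finite unions of finite intersections of the subbasic sets $\Gamma(x)$ (justified by Theorem~\ref{noetheriantheorem}), verify via $\gs$-compactness that the section map $x\mapsto\Gamma(x)$ and the finite intersection and union operations are Borel into $F(X)$, and conclude that the $\Gamma$-topology is the analytic image of a standard Borel parameter space. The only cosmetic difference is your indexing by $\bigsqcup_k([X]^{<\gw})^k$ versus the paper's maps $\pi_{nk}\colon(X^n)^k\to F(X)$, which is immaterial.
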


\begin{proof}
This is to say \cite{z:noetherian} that in the usual topology on the space $F(X)$ of all closed subsets of $X$, the collection of all sets closed in the $\Gamma$-topology is analytic. 

Since the space $X$ is $\gs$-compact, the intersection and union functions on $F(X)$ are both Borel, and if $Y$ is a Polish space and $C\subset Y\times X$ is a closed set, the map $y\mapsto C_y$ is a Borel map from $Y$ to $F(X)$ \cite[Section 12.C]{kechris:classical}. It follows that for every $n, k\in\gw$, the map $\pi_{nk}\colon (X^n)^k\to F(X)$ given by $\pi_{nk}(y)=\bigcup_{i\in k}\bigcap_{j\in n}\Gamma(y(i)(j))$ is Borel. Theorem~\ref{noetheriantheorem} shows exactly that the $\Gamma$-topology is the union of the ranges of all functions $\pi_{nk}$, and therefore analytic in $F(X)$.
\end{proof}

\noindent Finally, I show that all closed Noetherian graphs without uncountable cliques are countably chromatic in ZFC.

\begin{theorem}
\label{chromatictheorem}
Let $\Gamma$ be a closed Noetherian graph on a Polish space $X$, without an uncountable clique. The chromatic number of $\Gamma$ is countable.
\end{theorem}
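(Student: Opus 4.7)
The plan is to induct on the Noetherian rank of $X$ in the $\Gamma$-topology. By Theorem~\ref{noetheriantheorem} the $\Gamma$-topology is Noetherian, and by Theorem~\ref{definabilitytheorem} its collection of closed sets is analytic in $F(X)$. Applying Kunen--Martin boundedness to the analytic well-founded relation of strict reverse inclusion on $\Gamma$-closed sets, the rank $\rho(X)$ takes a value below some countable ordinal.

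The essential preliminary observation is that for every $x \in X$, the $\Gamma$-closure $\overline{\{x\}}$ is a $\Gamma$-clique, hence countable under the hypothesis. For $y_1, y_2 \in \overline{\{x\}}$, since $y_1 \in \Gamma(x)$ we have $x \in \Gamma(y_1)$ by symmetry of $\Gamma$, so $\Gamma(y_1)$ is a $\Gamma$-closed set containing $x$ and therefore containing $\overline{\{x\}}$; in particular $y_2 \in \Gamma(y_1)$.

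The induction base $\rho(X)=0$ is trivial. In the inductive step, the Noetherian property decomposes $X$ into its finitely many $\Gamma$-irreducible components; if $X$ is itself reducible, each component is a proper closed subset of strictly smaller rank and can be colored by induction, and combining the colorings with pairwise disjoint countable palettes produces a countable coloring of $X$. Thus one reduces to $X$ irreducible.

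In the irreducible case, I would color each $x$ by a pair $(n(x), i(x))$, where $i(x)$ is the position of $x$ in a fixed countable enumeration of its closure $\overline{\{x\}}$, and $n(x)\in\omega$ is chosen so as to properly color the \emph{closure-adjacency quotient}---the auxiliary graph on the family $\{\overline{\{x\}}:x\in X\}$ in which two closures $C, C'$ are adjacent when some $x\in C$ and $y\in C'$ satisfy $x\mathrel\Gamma y$. Adjacent points sharing a closure are distinguished by $i$; adjacent points in different closures must then be distinguished by $n$. The hard part is producing $n$ with countable range: this is where Theorem~\ref{definabilitytheorem} combines with Kunen--Martin boundedness, as the analytic and Noetherian structure on the closures should allow one to extract a countable dense subfamily in the specialization order and to define $n(x)$ via membership in that subfamily. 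The main obstacle is verifying that $n$ so defined is a proper coloring of the closure-adjacency quotient, which reduces to a Noetherian-rank argument exploiting the countable bound on $\rho$; granted that, the pair $(n(x), i(x))$ gives a proper coloring of $X$ with the countable palette $\omega\times\omega$.
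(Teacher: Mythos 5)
There is a genuine gap, and it sits exactly where the theorem's difficulty lies. Your preliminary steps are fine: the closure $\overline{\{x\}}$ in the $\Gamma$-topology is indeed a clique (hence countable), and the reduction along the reducible/irreducible decomposition is sound as a well-founded induction. But the irreducible case is not an auxiliary case --- in the motivating examples (e.g.\ distance graphs on $\mathbb{R}^n$, where $\Gamma$-closed sets are finite unions of sphere systems) the whole space $X$ is already irreducible in the $\Gamma$-topology, so essentially all of the content of the theorem is concentrated there. At that point your argument stops: you need a countable proper coloring $n$ of the ``closure-adjacency quotient,'' and no construction is given. Nothing in the hypotheses transfers to that quotient graph: it is not obviously closed, not obviously Noetherian, and it is not even clear it has no uncountable clique (an uncountable family of pairwise adjacent closures need not yield an uncountable $\Gamma$-clique, since different pairs may be witnessed by different points). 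So the quotient step is not a reduction to a simpler instance; it is a restatement of the problem, and the proposed mechanism --- ``extract a countable dense subfamily in the specialization order and define $n(x)$ via membership'' --- does not describe any way of making $n$ respect adjacency. Note also that the countable bound on the Noetherian rank via Kunen--Martin is both unavailable as stated (Theorem~\ref{definabilitytheorem} assumes $\sigma$-compactness, which Theorem~\ref{chromatictheorem} does not) and beside the point: a well-founded induction needs no countable rank, and countability of the rank does not by itself produce colorings.

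What is actually needed, and what the paper supplies, is a separation lemma in place of your quotient: for a set $A$ closed under the operation $a\mapsto\heartsuit(a)$ ($\Gamma$-good) and a point $x\notin A$, there is a basic open neighborhood of $x$ containing no $\Gamma$-neighbor of $x$ in $A$ (Proposition~\ref{goodproposition}); its proof uses the Noetherian property to pick a finite $b\subset A$ with $\Gamma(b)$ minimal and the closedness of $\Gamma$ to conclude that an accumulation point of neighbors would lie in $\heartsuit(b)\subseteq A$. With that lemma one colors by basic open sets via a transfinite induction on the cardinality of a $\Gamma$-good set, filtering it as an increasing union of smaller good sets and shrinking neighborhoods at each stage --- an induction on cardinality with a closure (goodness) condition, not an induction on topological rank. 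If you want to salvage your outline, you would have to prove a statement of this separating kind for your quotient, and at present no such argument is on the table.
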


\noindent The proof uses a definition and a proposition which will be of use later.

\begin{definition}
Let $\Gamma$ be a graph on a set $X$.

\begin{enumerate}
\item Let $a\subset X$ be a finite set. Then $\heartsuit(a)$ denotes the set $\{x\in X\colon \forall y\in a\ x=y$ or $x\mathrel\Gamma y$, and $\forall z\in X\ \forall y\in a (z=y\lor z\mathrel\Gamma y)\to (x=z\lor x\mathrel \Gamma z)\}$;
\item a set $A\subset X$ is \emph{$\Gamma$-good} if for every finite subset $a\subset A$, $\heartsuit(a)\subset A$.
\end{enumerate}
\end{definition}

\noindent It is obvious that $\heartsuit(a)$ is a $\Gamma$-clique and that $\Gamma$-goodness is a closure property. In particular, an increasing union of $\Gamma$-good sets is again $\Gamma$-good, and if no uncountable cliques exist in $\Gamma$ then every infinite subset of $X$ can be enclosed in a $\Gamma$-good set of the same cardinality. 

\begin{proposition}
\label{goodproposition}
Let $\Gamma$ be a closed Noetherian graph on a Polish space $X$. Let $A\subset X$ be a $\Gamma$-good set. For every point $x\in X\setminus A$ there is a basic open set $O\subset X$ containing $x$ and containing no elements of $A$ which are $\Gamma$-connected with $x$.
\end{proposition}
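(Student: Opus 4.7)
The plan is to show that $x$ lies outside the Polish closure of $B=\{y\in A:y\mathrel{\Gamma} x\}$; any basic open neighborhood of $x$ disjoint from this closure then witnesses the proposition. I will argue by contradiction, combining three ingredients: the Noetherian structure of the graph topology from Theorem~\ref{noetheriantheorem}, the $\Gamma$-goodness of $A$, and the closedness of the sets $\Gamma(z)$ in the Polish topology of $X$.

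First, apply the Noetherian property to the (possibly uncountable) family $\{\Gamma(y):y\in B\}$ of $\Gamma$-closed sets: their intersection equals $\Gamma(a^*)$ for some finite $a^*\subseteq B\subseteq A$. Since $x$ is $\Gamma$-connected to every element of $B$, we get $x\in\Gamma(a^*)$. Next, invoke the $\Gamma$-goodness of $A$: because $a^*\subseteq A$ is finite, $\heartsuit(a^*)\subseteq A$, and the hypothesis $x\notin A$ forces $x\notin\heartsuit(a^*)$. Since $x\in\Gamma(a^*)$, the definition of $\heartsuit$ then produces a witness $z\in\Gamma(a^*)$ with $z\neq x$ which is not $\Gamma$-connected to $x$.

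Finally, the membership $z\in\Gamma(a^*)=\bigcap_{y\in B}\Gamma(y)$ together with the symmetry of $\Gamma$ yields $B\setminus\{z\}\subseteq\Gamma(z)$. Since $\Gamma$ is closed, $\Gamma(z)$ is closed in the Polish topology of $X$, so every Polish limit of points in $B$ belongs to $\Gamma(z)$. If $x$ were in the Polish closure of $B$, this would give $x\in\Gamma(z)$, contradicting the choice of $z$. The only nontrivial step is the first one, where the Noetherian property of Theorem~\ref{noetheriantheorem} collapses the intersection over the arbitrary index set $B$ to one over a finite subset $a^*$; the remainder is routine bookkeeping with the definitions of $\heartsuit$ and $\Gamma$-goodness.
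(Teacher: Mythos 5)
Your proof is correct and uses the same ingredients as the paper's: the Noetherian property of the $\Gamma$-topology to replace $\bigcap_{y\in B}\Gamma(y)$ by $\Gamma(a^*)$ for a finite $a^*\subseteq B$, the definition of $\heartsuit$ together with $\Gamma$-goodness, and closedness of the sections $\Gamma(z)$ in the Polish topology. The only difference is the contrapositive arrangement: the paper assumes $x$ accumulates the connected points and concludes $x\in\heartsuit(b)\subseteq A$, whereas you extract the witness $z\in\Gamma(a^*)$ not connected to $x$ and use the closed set $\Gamma(z)$ to separate $x$ from the closure of $B$ --- essentially the same argument.
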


\begin{proof}
Suppose that $a\subset A$ is a set of points, all connected to a point $x\in X$ which is an accumulation point of $a$. It will be enough to show that $x\in A$ holds. 

To this end, use Theorem~\ref{noetheriantheorem} to find a finite set $b\subset a$ such that the set $\Gamma(b)$ is as small as possible. Note that $x\in\Gamma(b)$. Moreover, $x$ is $\Gamma$-connected to every point $y\in\Gamma(b)$ distinct from $x$. To see this, use the choice of the set $b$ to observe that the point $y$ is $\Gamma$-connected to every element of $a$. Since the graph $\Gamma$ is closed and $x$ is an accumulation point of $a$, $x\mathrel\Gamma y$ follows.

It follows that $x\in\heartsuit(b)$, therefore $x\in A$ holds by the goodness of the set $A$. The proof is complete.
\end{proof}

\begin{proof}[Proof of Theorem~\ref{chromatictheorem}]
Call a partial $\Gamma$-coloring $c$ \emph{suitable} if its range consists of basic open subsets of $X$ and for each $x\in\dom(c)$, $x\in c(x)$ holds.
By transfinite induction on the cardinality of an infinite $\Gamma$-good set $A\subset X$ prove that if $d$ is a function with domain $A$ assigning to any point $x\in A$ its basic open neighborhood $d(x)$, then there is a suitable coloring $c$ with domain $A$ such that $c(x)\subset d(x)$ holds for every $x\in A$. This will prove the theorem: in the end, one can apply it to $A=X$.

The statement is clear for countable $A$ as the coloring $c$ can in such a case be selected as an injection. Now, suppose that $A$ is a $\Gamma$-good set of uncountable cardinality, such that for all good sets of smaller cardinality the statement is known. Let $d$ be a function with domain $A$ such that for each $x\in A$ the value $d(x)$ is an open neighborhood of $A$. Express $A=\bigcup_{\gb\in\ga}A_\gb$ as an increasing union of $\Gamma$-good sets of smaller cardinality.
For each $\gb\in\ga$ let $d_\gb$ be a function with domain $A_\gb$ such that for every $x\in A_\gb$, $d_\gb(x)$ is an open neighborhood of $x$ which is a subset of $d(x)$ and if $x\notin\bigcup_{\gg\in\gb}A_\gg$, then $d_\gb(x)$ contains no elements of $\bigcup_{\gg\in\gb}A_\gg$ which are $\Gamma$-connected to $x$. This is possible by Proposition~\ref{goodproposition}. By the induction hypothesis, find suitable colorings $c_\gb$ with domain $A_\gb$ such that for every point $x\in A_\gb$, $c_\gb(x)\subset d_\gb(x)$. Now, let $c$ be the function with domain $A$ defined by $c(x)=c_\gb(x)$ where $\gb\in\ga$ is the smallest ordinal such that $x\in A_\gb$. It is not difficult to see that $c$ is a suitable $\Gamma$-coloring of the set $A$ verifying the induction step.
\end{proof}

\section{Initial examples}
\label{examplesection}

The main theorems of the paper need a supply of examples of Noetherian graphs to be meaningful. I concentrate on algebraic graphs and certain special type of $\gs$-algebraic graphs on Euclidean spaces.

\begin{definition}
Let $X$ be a Euclidean space of dimension $n\geq 1$. A graph $\Gamma$ on $X$ is \emph{algebraic} if there is a polynomial $\phi(\bar u, \bar v)$ of $2n$ free variables and real parameters such that for distinct points $x, y\in X$, $x\mathrel\Gamma y$ if and only if $\phi(x, y)=0$.
\end{definition}

\begin{theorem}
\label{boundtheorem}
Let $\Gamma$ be an algebraic graph on a Euclidean space $X$. Then $\Gamma$ is Noetherian, and exactly one of the following occurs:

\begin{enumerate}
\item $\Gamma$ contains a perfect clique;
\item $\Gamma$ is Noetherian, and there is a number $m\in\gw$ such that $\Gamma$ contains no clique of cardinality greater than $m$.
\end{enumerate}
\end{theorem}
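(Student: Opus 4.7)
The proof has two parts: showing that $\Gamma$ is Noetherian, and proving the dichotomy. The two clauses of the dichotomy are mutually exclusive, since a perfect clique is infinite.

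For Noetherianity, each $\Gamma(x) = \{y : y = x\lor \phi(x, y) = 0\}$ is Zariski-closed in $X = \R^n$ as a finite union of zero-sets of polynomials. By the Hilbert basis theorem, the Zariski topology on $X$ is Noetherian. The $\Gamma$-topology is by definition the smallest topology making each $\Gamma(x)$ closed, so its closed sets are generated from the $\Gamma(x)$'s by finite unions and arbitrary intersections; these operations preserve Zariski-closedness, so the $\Gamma$-closed sets are all Zariski-closed. Therefore the $\Gamma$-topology is Noetherian, and Theorem~\ref{noetheriantheorem} gives that $\Gamma$ is Noetherian.

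For the dichotomy, the central sub-lemma is: \emph{an infinite $\Gamma$-clique yields a perfect $\Gamma$-clique.} Let $K$ be an infinite clique. The symmetric relation $R = \{(x, y) \in X^2 : x = y\lor \phi(x, y) = 0\}$ is Zariski-closed in $X^2$, and $K \times K \subseteq R$. Since the Zariski closure of a product is the product of closures, $V := \overline{K}^{\mathrm{Zar}}$ satisfies $V \times V \subseteq R$, so $V$ is itself a clique. As an infinite Zariski-closed subset of $\R^n$, $V$ has positive semi-algebraic dimension and so contains a semi-algebraic arc---in particular, $V$ is uncountable. Since $V$ is Euclidean-closed in $X$, it is Polish, and the Cantor--Bendixson theorem provides a perfect subset of $V$, which is the desired perfect clique.

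It remains to show that if $\Gamma$ has no infinite clique, its clique sizes are bounded. I plan to argue by well-founded induction on the Zariski-closed subsets of $X$, ordered by strict inclusion (well-founded by the Noetherian Zariski topology). For each Zariski-closed $V$ I would produce a finite clique bound on the graph induced on $V$: singletons are trivial; for larger $V$, either $\Gamma(v) \cap V = V$ for every $v \in V$ (making $V$ a $\Gamma$-clique, hence finite by the no-infinite-clique hypothesis) or some $v_0 \in V$ has $\Gamma(v_0) \cap V \subsetneq V$, in which case the inductive hypothesis applied to the proper sub-variety $\Gamma(v) \cap V$ furnishes a bound $m(v)$ for each such $v$. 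A $k$-clique in $V$ restricts to a $(k{-}1)$-clique in $\Gamma(v) \cap V$ for any vertex $v$ of the clique, so a uniform bound $\sup_v m(v) < \infty$ would convert the pointwise bounds into a bound on $V$.

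The main obstacle is precisely this uniformity. I expect to obtain it from o-minimality of the real closed field structure augmented by $\phi$: the sets $S_k = \{v \in V : \Gamma(v) \cap V \text{ contains a } k\text{-clique}\}$ are semi-algebraic, decreasing in $k$, with $\bigcap_k S_k$ empty (by the inductive hypothesis applied to each fiber). Semi-algebraic dimension and cell decomposition---combined with the Noetherian induction applied fiberwise---should then force $S_k = \emptyset$ for some $k$. This step, which upgrades pointwise clique finiteness to uniform finiteness by exploiting algebraicity beyond the mere Noetherianity of the graph topology, is the most delicate point of the argument.
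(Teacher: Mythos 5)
Your Noetherianity argument is fine (each $\Gamma(x)$ is Zariski-closed, so every $\Gamma$-closed set is Zariski-closed and the Hilbert basis theorem plus Theorem~\ref{noetheriantheorem} applies), and your sub-lemma that an infinite clique yields a perfect clique is correct and pleasantly direct: $\overline{K}^{\mathrm{Zar}}\times\overline{K}^{\mathrm{Zar}}=\overline{K\times K}^{\mathrm{Zar}}\subseteq R$, and an infinite real algebraic set is uncountable and Euclidean-closed, hence contains a perfect set. But this only proves the weak dichotomy ``perfect clique or no infinite clique''; alternative (2) asserts a \emph{uniform} finite bound $m$ on clique sizes, and that is exactly the step your proposal does not deliver. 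Your closing mechanism is not valid as stated: a decreasing sequence of semi-algebraic sets $S_k$ with empty intersection need not be eventually empty (take $S_k=(0,1/k)$ in $\R$) --- the descending chain condition fails badly for semi-algebraic sets, so ``$\bigcap_k S_k=\emptyset$ forces $S_k=\emptyset$ for some $k$'' has no force. Moreover the claim $\bigcap_k S_k=\emptyset$ is itself circular for the vertices $v$ with $\Gamma(v)\cap V=V$: for those fibers the inductive hypothesis gives nothing, and $v\in S_k$ for all $k$ precisely when $V$ has unbounded clique sizes, which is what you are trying to refute (this can be patched, since such $v$ form a clique, but the uniformity problem over the remaining $v$ remains). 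You yourself flag this as the delicate point, and it is a genuine gap: pointwise finiteness of clique numbers in the fibers does not self-improve to a uniform bound without a definable-finiteness input.

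For comparison, the paper gets uniformity from two quantitative ingredients rather than from a chain condition. First, a degree-counting tree argument shows there is a single $k$ (depending only on the dimension of $X$ and the degree of $\phi$) such that every finite $a\subset X$ has a subset $b\subseteq a$ with $|b|\leq k$ and $\Gamma(b)=\Gamma(a)$; consequently every clique $a$ satisfies $a\subseteq\heartsuit(b)$ for some $b\in[a]^{\leq k}$. Second, the family $B=\{\langle x,y\rangle\in X^k\times X\colon y\in\heartsuit(\rng(x))\}$ is a single semi-algebraic family whose fibers are semi-algebraic cliques, hence finite when there is no perfect clique, and the uniform finiteness theorem for semi-algebraic families (\cite[Chapter 3, Lemma 1.7]{vandendries:tame}) then yields one bound $m$ for all fibers, hence for all cliques. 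Something of this shape --- packaging all candidate cliques into one definable family with finite fibers, or else a transfer/compactness argument producing a positive-dimensional clique from unbounded finite cliques --- is what your o-minimality step would have to become for the proof to close.
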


\begin{proof}
To prove the Noetherian property of the graph $\Gamma$, work to exclude a variation of the three quarter graph from it (the half graph is treated in the same way). Suppose towards a contradiction that $x_n, y_n\colon n\in\gw$ are vertices in $X$ which induce a copy of a variation of the three quarter graph. Consider the intersection of sets $\Gamma(x_n)$ for $n\in\gw$. This is an intersection of an infinite collection of algebraic sets which contains no points $y_n$ for $n\in\gw$ since $x_n$ is disconnected with $y_n$. However, the intersection of any finite subcollection contains all but finitely many points $x_n$ for $n\in\gw$. This contradicts the Hilbert basis theorem.

(1) clearly implies the failure of (2). To show that the failure of (1) implies a finite bound on the size of $\Gamma$-cliques, I will need a general claim. 

\begin{claim}
There is a number $k$ such that for every finite set $a\subset X$ there is a set $b\subseteq a$ of cardinality at most $k$ such that $\Gamma(b)=\Gamma(a)$.
\end{claim}

\begin{proof}
Write $n$ for the dimension of $X$ and $l$ for the degree of the polynomial defining the graph $\Gamma$. As a quite inefficient estimate, $k=n(2^nl)^n$ will work. To see this, by tree recursion build a tree $T$ and functions $f, g$ on $T$ so that

\begin{itemize}
\item $f(0)=X$ and for every $t\in T$, $f(t)\subseteq X$ is always an irreducible algebraic subset of $X$;
\item for every node $t\in T$, $g(t)$ is some element of $a$ such that $\Gamma(g(t))\cap f(t)\neq f(t)$ if it exists, otherwise $g(t)=!$ and $t$ is a terminal node of $T$;
\item for every node $t\in T$, if $g(t)\in a$ then the set $\{f(s)\colon s$ is an immediate successor of $t$ in $T\}$ lists irreducible components of the algebraic set $\Gamma(g(t))\cap f(t)$ without repetition.
\end{itemize}

\noindent It turns out that the cardinality of the tree $T$ is at most $k$. To see this, work to estimate the depth of the tree and the rate at which it branches. Since the function $f$ maps the tree ordering on $T$ to strict inclusion of irreducible algebraic subsets of $X$, the depth of the tree is at most $n$. By induction on $|t|$, where $t\in T$, argue that the set $g(t)\subseteq X$ is defined by a polynomial of degree at most $2^{|t|}l$. To see that, note that if $p$ is a polynomial defining $g(t)$, then for every immediate successor $s$ of $t$, $g(s)$ is defined by an irreducible factor of $p^2+q^2(f(t))$ where $q(f(t))$ is the polynomial defining $\Gamma(f(t))$, which is of degree at most $l$. Lastly, since the immediate successors of the node $t$ are labeled with irreducible components of $g(t)$ and these are given by irreducible factors of $p^2+q^2(f(t))$, the node $t$ can have at most $2^{|t|+1}l$ many immediate successors. Simple arithmetic then shows that $|T|\leq k$.

In the end, let $b=\rng(f)$ and observe that the set $b$ works: the sets $\Gamma(a)$ and $\Gamma(b)$ are both equal to the union of $f(t)$ as $t$ ranges over all terminal nodes of the tree $T$.
\end{proof}

\noindent Now, suppose that $\Gamma$ has no perfect clique. Let $k\in\gw$ be a number which works as in the claim. Consider the set $B\subset X^k\times X$ defined by $\langle x, y\rangle\in B$ if $y\in\heartsuit(\rng(x))$. The set $B$ is semi-algebraic, and all of its vertical sections are semi-algebraic $\Gamma$-cliques. None of them are uncountable by the assumption on $\Gamma$, and since every semi-algebraic set is either finite or uncountable, all vertical sections of $B$ are finite. Every semi-algebraic set with finite vertical sections enjoys a finite bound on the cardinality of vertical sections \cite[Chapter 3, Lemma 1.7]{vandendries:tame}. Let $m\in\gw$ be such a bound for the cardinality of vertical sections of $B$. I claim there are no $\Gamma$-cliques of cardinality greater than $m$.

To see this, suppose that $a\subset X$ is a $\Gamma$-clique. Let $b\subset a$ be a set of cardinality at most $k$ such that $\Gamma(b)=\Gamma(a)$. Let $x\in X^k$ enumerate, with possible repetitions, all elements of $b$. Clearly, $a\subseteq\heartsuit(b)$ holds, therefore $|a|\leq m$ as desired.
\end{proof}

\begin{example}
Let $C\subset\mathbb{R}^2$ be an irreducible algebraic curve containing the origin, and not equal to a line through the origin. Let $\Gamma_C$ be the graph on $\mathbb{R}^2$ connecting distinct points $x, y$ if $x-y\in C$ or $y-x\in C$. The graph $\Gamma_C$ has no uncountable clique. In fact, there is a number $n\in\gw$ such that $\Gamma_C$ does not contain the bipartite graph $K_{2, n}$ as a subgraph.
\end{example}

\begin{proof}
First argue that $\Gamma_C$ does not contain $K_{2, \gw}$. Suppose towards a contradiction that $x_0, x_1\in \mathbb{R}^2$ are distinct points and $a\subset\mathbb{R}^2$ is an infinite set of points all of which are connected to both $x_0$ and $x_1$. For definiteness, assume that the set $b=\{y\in a\colon y-x_0\in C$ and $y-x_1\in C\}$ is infinite. Then the set $b$ is an infinite subset of $(C+x_0)\cap (C+x_1)$. Since the two algebraic sets in this intersection are irreducible, their intersection is either finite or equal to both. In conclusion, $C+x_0=C+x_1$ holds, in other words $C+(x_0-x_1)=C$. Since $0\in C$, this means that $n(x_0-x_1)\in C$ for any $n\in\gw$, and $C$ has infinite intersection with the line through the origin of direction $x_0-x_1$. An irreducibility argument again shows that $C$ has to be equal to that line, contradicting the initial choice of $C$.

Now, consider the algebraic set $B=\{\langle z, y\rangle\in (\mathbb{R}^2)^2\times\mathbb{R}^2\colon$ the two entries of $z$ are distinct and $y$ is $\Gamma_C$-related to each$\}$. This is a semi-algebraic set with finite vertical sections by the previous paragraph. By \cite[Chapter 3, Lemma 1.7]{vandendries:tame}, there is a number $n\in\gw$ such that all vertical sections of the set $B$ have cardinality at most $n$. This completes the proof.
\end{proof}

\begin{definition}
Let $X$ be a Euclidean space of dimension $n\geq 1$. A graph $\Gamma$ on $X$ is \emph{tight $\gs$-algebraic} if there are algebraic graphs $\Gamma_m$ for $m\in\gw$ on $X$ such that $\Gamma=\bigcup_m\Gamma_m$ and the real numbers $\eps_m=\sup\{d(x, y)\colon x\mathrel\Gamma_m y\}$ tend to zero.
\end{definition}

\begin{theorem}
Let $\Gamma$ be a tight $\gs$-algebraic graph on a Euclidean space $X$. Then $\Gamma$ is Noetherian and exactly one of the following occurs:

\begin{enumerate}
\item $\Gamma$ contains a perfect clique;
\item $\Gamma$ contains no infinite clique.
\end{enumerate}
\end{theorem}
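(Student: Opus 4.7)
The plan is to mirror the structure of Theorem~\ref{boundtheorem}: first establish that $\Gamma$ is Noetherian, then prove the dichotomy. Write $\Gamma=\bigcup_m \Gamma_m$ with each $\Gamma_m$ algebraic and $\eps_m\to 0$.

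For the Noetherian property, suppose toward a contradiction that $\Gamma$ contains a variation of the half or three-quarter graph on vertices $\{x_n,y_n:n\in\gw\}$. Since every $\Gamma$-edge has length at most $\sup_m\eps_m<\infty$, the vertex set is bounded in the Euclidean space; by Bolzano--Weierstrass pass to a subsequence with $x_n\to x^*$ and $y_n\to y^*$. In the case $x^*\neq y^*$, each $x_n y_m$ edge has length $\to d(x^*,y^*)>0$ and so belongs to one of only finitely many $\Gamma_k$ (those with $\eps_k\geq d(x^*,y^*)/2$). A first application of bipartite infinite Ramsey forces all the $x$-$y$ edges into a single $\Gamma_M$, and two further Ramsey applications homogenize the $\Gamma_M$-relations inside $\{x_n\}$ and inside $\{y_n\}$. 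The $x_n y_n$ non-edges persist in $\Gamma_M\subseteq\Gamma$, so the result is a variation of the half or three-quarter graph inside the algebraic graph $\Gamma_M$, contradicting Theorem~\ref{boundtheorem}.

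The case $x^*=y^*$ is more delicate and constitutes the main obstacle. Edges have length tending to zero and may involve $\Gamma_k$ with $k$ unbounded. For each $n$ with $x_n\neq x^*$, the edges $x_n y_m$ have length approaching $d(x_n,x^*)>0$, so by pigeonhole some $\Gamma_{M_n}$ contains infinitely many of them; since $\Gamma_{M_n}$ is closed and $y_m\to x^*$, we get $x_n\Gamma x^*$, and symmetrically $y_n\Gamma x^*$. Thus $x^*$ is a common $\Gamma$-neighbor of every configuration vertex, and the non-edge $x_n\not\Gamma y_n$ forces $x^*\notin\{x_n,y_n\}$. A further pigeonhole places the $x^*x_n$ edges in a single $\Gamma_M$ and the $x^*y_n$ edges in a single $\Gamma_{M'}$. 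The contradiction is then obtained by exploiting this hub structure together with the rigidity of the algebraic slices $\Gamma_M(x^*)\supseteq\{x_n\}$ and $\Gamma_{M'}(x^*)\supseteq\{y_n\}$, so as to produce a variation of the half or three-quarter graph inside the finite union $\Gamma_M\cup\Gamma_{M'}$---which is itself Noetherian, since finite unions of Noetherian graphs are Noetherian by a Ramsey argument.

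For the dichotomy, if some $\Gamma_m$ contains a perfect clique then so does $\Gamma$. Otherwise each $\Gamma_m$ has a finite clique bound by Theorem~\ref{boundtheorem}, and I show $\Gamma$ has no infinite clique. If $A=\{a_n\}$ were an infinite $\Gamma$-clique, it would be bounded and so accumulate at some $a^*\in X$; apply the Erd\H{o}s--Rado canonical Ramsey theorem to the coloring of pairs of $A$ by the least $m$ with the edge in $\Gamma_m$. The constant case contradicts the finite clique bound of $\Gamma_M$; the injective case is ruled out because $d(a_0,a_j)\to d(a_0,a^*)>0$ restricts the colors from $a_0$ to a finite set. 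The min- and max-dependent cases combine with the closure argument of the Noetherian part to yield $a^*\Gamma a_i$ for every $i$, so that the closed $\Gamma$-clique $\bar A\cup\{a^*\}$ is either uncountable---giving a perfect clique via Cantor--Bendixson, contradicting our assumption---or it reduces via iteration to one of the cases already ruled out.
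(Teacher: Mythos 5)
Your reduction to Theorem~\ref{boundtheorem} works only in the easy half of the argument, and the two places where all the difficulty of the tight $\gs$-algebraic case actually sits are left without a proof. In the Noetherian part, the case $x^*=y^*$ is exactly the delicate one, and what you write there does not go through: since $d(x^*,x_n)\to 0$, the edges $x^*x_n$ may lie in pairwise distinct graphs $\Gamma_k$ with $k$ unbounded, so the ``further pigeonhole'' placing them in a single $\Gamma_M$ is unjustified; moreover in the half-graph variation each $x_n$ has only finitely many cross-neighbours $y_m$, so even the preliminary claim $x_n\mathrel\Gamma x^*$ fails as stated. Most importantly, the forbidden configuration lives on the cross pairs $x_ny_m$, whose lengths tend to $0$ and which therefore cannot be forced into the fixed finite union $\Gamma_M\cup\Gamma_{M'}$; the sentence ``the contradiction is then obtained by exploiting this hub structure together with the rigidity of the algebraic slices'' is not an argument. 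The paper avoids the need for a single $\Gamma_M$ altogether: thin the configuration to a set that is discrete as a subspace, so that for each fixed vertex all incident cross-edges have length bounded below and hence lie in a finite union $\bigcup_{m\in m_n}\Gamma_m$ (an algebraic set depending on $n$), and then contradict the Hilbert basis theorem directly with the strictly decreasing chain of intersections of these algebraic neighbourhoods. Your $x^*\neq y^*$ case is fine, but it is the uniform-length situation where no real work is needed.

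The dichotomy has the same problem. The constant and injective canonical cases are fine (after arranging $a_0\neq a^*$), but the min- and max-dependent cases are where the content lies, and ``reduces via iteration to one of the cases already ruled out'' is not a proof: adding the limit point $a^*$ just produces another infinite clique, the set $\bar A\cup\{a^*\}$ is countable so the Cantor--Bendixson branch is vacuous, and no termination argument is given for the ``iteration.'' (The max case can in fact be killed cheaply: injectivity of the colour on the max coordinate forces $\eps_{c(j)}\to 0$, so $a_j$ would converge to both $a_0$ and $a_1$; but the min case genuinely needs a descending-chain argument, e.g.\ observing that the algebraic sets $\bigcap_{i<K}\Gamma_{c(i)}(a_i)$ each contain a tail of the clique while their total intersection misses all $a_j\neq a^*$, contradicting the Hilbert basis theorem.) The paper's proof instead recursively extracts $n_k, m_k, b_k$ with $x_{n_k}\mathrel\Gamma_{m_k}x_n$ for $n\in b_{k+1}$, uses discreteness plus $\eps_m\to 0$ and the Hilbert basis theorem to show only finitely many $m_k$ occur, and thus finds an infinite clique inside a single $\Gamma_m$, to which Theorem~\ref{boundtheorem} applies. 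You need an argument of that kind (or a completed canonical-Ramsey analysis) before the dichotomy can be considered proved.
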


\begin{proof}
Let $\Gamma=\bigcup_m\Gamma_m$ be a witness to the tight $\gs$-algebraicity of $\Gamma$. 
To verify the Noetherian property of $\Gamma$, suppose towards a contradiction that $x_n, y_n\colon n\in\gw$ are vertices in $X$ which induce a copy of a variation of the three quarter graph (the half graph is treated in the same way). Thinning out if necessary I may assume that the set $\{x_n, y_n\colon n\in\gw\}$ is discrete. It follows that for each $n\in\gw$ there is a number $m_n\in\gw$ such that the points $x_n$ are $\bigcup_{m\in m_n}\Gamma_m$-connected to points $y_k$ for all $k\neq n$. Consider the intersection of all sets $\bigcup_{m\in m_n}\Gamma_m(x_n)$ for $n\in\gw$. This is an intersection of an infinite collection of algebraic sets which contains no points $y_n$ for $n\in\gw$. The intersection of any finite subcolection contains all but finitely many points $y_n$ for $n\in\gw$. This contradicts the Hilbert basis theorem.

Clearly (1) implies the failure of (2). Now, suppose that (2) fails and work to confirm (1).
Let $\{x_n\colon n\in\gw\}$ is an infinite $\Gamma$-clique. I will produce a number $m\in\gw$ such that an infinite subset of this clique forms a $\Gamma_m$-clique. Then, an application of Theorem~\ref{boundtheorem} provides a perfect $\Gamma_m$-clique, therefore a perfect $\Gamma$-clique.

Thinning out the clique if necessary, assume that it is discrete. By recursion on $k\in\gw$ build numbers $n_k, m_k$, and infinite sets $b_k\subset\gw$ such that

\begin{itemize}
\item $n_{k+1}>n_k$, $b_{k+1}\subset b_k$;
\item $n_{k}\in b_k$;
\item $x_{n_k}\mathrel \Gamma_{m_k}x_n$ for all $n\in b_{k+1}$.
\end{itemize}

\noindent To start, let $n_0=0$ and $b_0=\gw$. Since $x_0$ is an isolated point of the clique, the tightness condition on the graph $\Gamma$ implies that there are only finitely many numbers $m$ such that $x_0\Gamma_m x$ holds for some $x\neq x_0$ in the clique. It follows that there must be a number $m_0$ such that the set $b_1=\{n\in\gw\colon x_0\mathrel\Gamma_{m_0}x_n\}$ is infinite. The recursion step is performed in a similar way.

Now, observe that the set $\{m_k\colon k\in\gw\}$ must be finite. Otherwise, consider the intersection $\bigcap_k \Gamma_{m_k}(x_{n_k})$. This is an intersection of of an infinite collection of algebraic sets. It contains no elements of the set $\{x_{n_k}\colon k\in\gw\}$ since all elements of this set are isolated in it and the numbers $m_k\in\gw$ grow arbitrarily large. Intersection of any finite subcollection always contains all but finitely many of the set $\{x_{n_k}\colon k\in\gw\}$. This contradicts the Hilbert basis theorem.

It is then possible to find a number $m\in\gw$ such that the set $c=\{k\in\gw\colon m_k=m\}$ is infinite. The set $\{x_{n_k}\colon k\in c\}$ is an infinite $\Gamma_m$-clique as desired.
\end{proof}

\begin{example}
\label{distanceexample}
If $a\subset\mathbb{R}$ is a bounded countable set of positive reals with $0$ as the only accumulation point, the graph $\Gamma$ on a Euclidean space $X$ connecting points whose distance belongs to the set $a$ is tight $\gs$-algebraic. It contains no infinite clique.
\end{example}

\begin{proof}
Suppose towards a contradiction that $\{x_n\colon n\in\gw\}$ is an infinite $\Gamma$-clique. Thinning out if necessary, assume that the clique is discrete. For every number $n\in\gw$, there must be a finite set $b_n\subset a$ such that the distance of the point $x_n$ from any point $x_m$ for $m\neq n$ belongs to the set $b_n$. Let $A_n\subset X$ be the algebraic set of all points in $X$ whose distance from $x_n$ belongs to the finite set $b_n$. The intersection of all sets $A_n$ for $n\in\gw$ contains no elements of the clique since $x_n\notin A_n$. On the other hand, intersection of any finite subcollection contains all but finitely many elements of the clique. This contradicts the Hilbert basis theorem.
\end{proof}

\section{A balanced coloring poset}
\label{balancedsection}

This section contains a description of a canonical poset adding a coloring to a closed Noetherian graph without an uncountable clique. The poset is balanced in the sense of \cite[Chapter 5]{z:geometric}. Its further preservation properties will be proved in Section~\ref{wrapupsection}.

\begin{definition}
\label{coloringposetdefinition}
Let $\Gamma$ be a closed Noetherian graph on a $\gs$-compact Polish space $X$ without an uncountable clique. The \emph{coloring poset} $P_\Gamma$ consists of countable partial $\Gamma$-colorings $p$ such that 

\begin{enumerate}
\item $\dom(p)$ is a countable $\Gamma$-good subset of $X$;
\item $\rng(p)$ consists of basic open subsets of $X$ and for each $x\in\dom(p)$, $x\in p(x)$ holds.
\end{enumerate}

\noindent The ordering is defined by $q\leq p$ if $p\subseteq q$ and for every point $x\in\dom(q\setminus p)$, the set $q(x)$ contains no elements of $\dom(p)$ which are $\Gamma$-connected to $x$.
\end{definition}

\noindent Verification of the key properties of the poset $P_\Gamma$ proceeds by a series of propositions.

\begin{proposition}
$P_\Gamma$ is a $\gs$-closed transitive relation.
\end{proposition}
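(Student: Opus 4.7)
The proposition asserts two things, transitivity of the ordering and $\sigma$-closure, and I would prove them in that order since $\sigma$-closure will invoke transitivity. For transitivity, suppose $r \leq q \leq p$. Set-theoretic inclusion $p \subseteq r$ is immediate, so I only need to verify the neighborhood-avoidance clause at each point $x \in \dom(r) \setminus \dom(p)$. I would split into two cases. If $x \in \dom(q) \setminus \dom(p)$, then $r$ and $q$ agree at $x$ (both are functions and $q \subseteq r$), and the condition $q \leq p$ gives that $q(x) = r(x)$ contains no element of $\dom(p)$ which is $\Gamma$-connected to $x$. If instead $x \in \dom(r) \setminus \dom(q)$, then $r \leq q$ already ensures that $r(x)$ avoids $\Gamma$-neighbors in $\dom(q)$, which is a superset of $\dom(p)$.

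For $\sigma$-closure, given a decreasing sequence $p_0 \geq p_1 \geq \dots$ in $P_\Gamma$, I take $p := \bigcup_n p_n$ as the candidate lower bound. The extension clauses $p_n \subseteq p_{n+1}$ make $p$ a function, and its range-clause (each value is a basic open set containing the corresponding point) is inherited pointwise from the $p_n$. Its domain is a countable union of countable $\Gamma$-good subsets of $X$, hence itself a countable $\Gamma$-good set, by the closure property of $\Gamma$-goodness under increasing unions recorded just before Proposition~\ref{goodproposition}. Finally, $p$ is a valid partial $\Gamma$-coloring because any two $\Gamma$-adjacent vertices in $\dom(p)$ already lie in a common $\dom(p_N)$ and so receive distinct colors from the coloring $p_N$.

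To complete the $\sigma$-closure argument I check $p \leq p_n$ for each fixed $n$. Given $x \in \dom(p) \setminus \dom(p_n)$, pick some $N \geq n$ with $x \in \dom(p_N)$; by the transitivity just established, $p_N \leq p_n$, and since $p(x) = p_N(x)$ the neighborhood-avoidance condition required by $p \leq p_n$ is inherited directly from $p_N \leq p_n$. The whole verification is routine bookkeeping; the only nontrivial ingredient is the closure of $\Gamma$-goodness under increasing unions, which is already available from Section~\ref{observationsection}, so I do not anticipate any genuine obstacle.
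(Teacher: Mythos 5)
Your proof is correct and follows the same route as the paper, which simply notes that transitivity is clear and that the union of a descending sequence is a common lower bound; you have filled in the routine verifications (including the use of closure of $\Gamma$-goodness under increasing unions) accurately.
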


\begin{proof}
The transitivity is clear. If $\langle p_i\colon i\in\gw\rangle$ is a descending sequence of conditions in $P_\Gamma$, then $\bigcup_ip_i$ is their common lower bound.
\end{proof}

\begin{proposition}
\label{thirdp}
Suppose that $a\subset P_\Gamma$ is a finite set. The following are equivalent:

\begin{enumerate}
\item $a$ has a common lower bound in $P$;
\item for every point $x\in X$, $a$ has a common lower bound in $P$ which contains $x$ in its domain;
\item $\bigcup a$ is a function and for any two distinct conditions $p_0, p_1\in a$ and any two $\Gamma$-connected points $x_0\in\dom(p_0\setminus p_1)$ and $x_1\in\dom(p_1\setminus p_0)$, the sets $p_0(x_0)$ and $p_1(x_1)$ do not contain $x_1$ and $x_0$ respectively.
\end{enumerate}
\end{proposition}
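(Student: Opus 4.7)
I would handle (2) $\Rightarrow$ (1) in a single line, since it is immediate. For (1) $\Rightarrow$ (3), the plan is: given a common lower bound $q$, the inclusion $p \subseteq q$ for each $p \in a$ makes $\bigcup a$ a function; and for the avoidance clause, I would fix distinct $p_0, p_1 \in a$ and $\Gamma$-connected points $x_0 \in \dom(p_0) \setminus \dom(p_1)$, $x_1 \in \dom(p_1) \setminus \dom(p_0)$, observe that $x_0 \in \dom(q) \setminus \dom(p_1)$, and read off from $q \leq p_1$ that $q(x_0) = p_0(x_0)$ misses $x_1$. The symmetric containment follows by swapping roles.

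The substantive direction is (3) $\Rightarrow$ (2). Fixing $x \in X$, I set $D = \bigcup_{p \in a}\dom(p)$ and $p_D = \bigcup a$, which is a function by (3). My plan is to take a countable $\Gamma$-good set $A^* \supseteq D \cup \{x\}$ — such a set exists because the absence of uncountable cliques keeps the $\heartsuit$-closure of any countable set countable — and to build a condition $q$ with $\dom(q) = A^*$ extending $p_D$. On $D$ I would set $q(y) = p_D(y)$, and the remaining values $q(y_i)$ for an enumeration $A^* \setminus D = \{y_i : i < \omega\}$ would be chosen inductively.

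For the inductive step the key observation is that for each $p \in a$, the point $y_i$ lies outside the $\Gamma$-good set $\dom(p)$, so Proposition~\ref{goodproposition} delivers a basic open $O_p \ni y_i$ missing every $\Gamma$-neighbor of $y_i$ in $\dom(p)$. I would then take a basic open $q(y_i) \subseteq \bigcap_{p \in a} O_p$ containing $y_i$, further shrunk to avoid the finitely many earlier $y_j$ ($j < i$) that are $\Gamma$-connected to $y_i$. Finiteness of $a$ is precisely what turns this intersection into a viable open neighborhood.

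Verification that $q \in P_\Gamma$ and $q \leq p$ for every $p \in a$ would proceed by case analysis on whether each relevant point lies in $D$ or in $A^* \setminus D$. The partial-coloring condition for $q$ splits into pairs within $D$ (handled jointly by (3) and the coloring property of each $p$ on $\dom(p)$), pairs within $A^* \setminus D$ (handled by the inductive construction), and cross pairs (which inherit their avoidance from the construction). The hard part will be the subcase of $q \leq p$ where $y \in D \setminus \dom(p)$, so that $y \in \dom(p')$ for some $p' \neq p$ and $q(y) = p'(y)$: for $z \in \dom(p)$ $\Gamma$-connected to $y$, either $z \in \dom(p')$ and the partial-coloring property of $p'$ on its $\Gamma$-good domain yields $z \notin p'(y)$, or $z \notin \dom(p')$ and condition (3) applies directly. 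The main obstacle I anticipate is to recognize that the partial-coloring property of each individual $p \in a$ already dispatches the intersection case $z \in \dom(p) \cap \dom(p')$, which (3) by itself does not cover.
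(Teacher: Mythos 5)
Your route is the paper's route: (2)$\Rightarrow$(1)$\Rightarrow$(3) read off the definitions, and for (3)$\Rightarrow$(2) enclose $\bigcup_{p\in a}\dom(p)\cup\{x\}$ in a countable $\Gamma$-good set, keep the old values, and color the new points via Proposition~\ref{goodproposition} applied to each $\dom(p)$ separately (a correct touch, since a finite union of good sets need not be good); your inductive shrinking to separate a new point from earlier $\Gamma$-connected new points does the same work as the paper's injection into basic open sets, because $y_j\in q(y_j)\setminus q(y_i)$ already forces $q(y_i)\neq q(y_j)$.

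However, the step you yourself flag as the hard part is not dispatched by your argument, and this is a genuine gap. In the subcase $y\in\dom(p')\setminus\dom(p)$, $z\in\dom(p)\cap\dom(p')$, $z\mathrel\Gamma y$, you assert that ``the partial-coloring property of $p'$ on its $\Gamma$-good domain yields $z\notin p'(y)$.'' It does not: Definition~\ref{coloringposetdefinition} only requires $p'$ to be a partial coloring with $x\in p'(x)$, so all you get is $p'(y)\neq p'(z)$; nothing prevents the basic open set $p'(y)$ from containing the neighbor $z$, and $\Gamma$-goodness constrains the domain, not where the colors sit. Moreover this subcase cannot be rescued from (3) as literally stated. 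Take $X=\R$ and let $\Gamma$ have exactly the edges $\{0,1\}$ and $\{0,2\}$; this graph is closed, Noetherian, with clique number $2$. One checks $\heartsuit(\{0\})=\{0\}$ (as $1,2$ are nonadjacent) and $\heartsuit(\{1\})=\heartsuit(\{0,1\})=\{0,1\}$, so $\{0\}$ and $\{0,1\}$ are $\Gamma$-good. Let $p'$ have domain $\{0,1\}$ with $p'(0)=(-\tfrac{1}{10},\tfrac{1}{10})$ and $p'(1)=(-\tfrac{1}{2},\tfrac{3}{2})$, and let $p=p'\restriction\{0\}$. Both are conditions, $\bigcup\{p,p'\}$ is a function, and the avoidance clause of (3) is vacuous because $\dom(p\setminus p')=\emptyset$; yet any $q\leq p'$ has $q(1)=p'(1)\ni 0$ with $0\in\dom(p)$ and $0\mathrel\Gamma 1$, so $q\not\leq p$ and $\{p,p'\}$ has no common lower bound. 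So the configuration you tried to dispatch is a real obstruction rather than a consequence of the definitions.

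For perspective, the paper's own proof slides over exactly this subcase with ``this is clear \dots by (3),'' although (3) only speaks about $z\in\dom(p\setminus p')$; so you have located a genuine fault line rather than a private difficulty. The natural repair is to strengthen item (3) so that $x_1$ ranges over all of $\dom(p_1)$, i.e.\ to demand that for $\Gamma$-connected points $x_0\in\dom(p_0)\setminus\dom(p_1)$ and $x_1\in\dom(p_1)$ one has $x_1\notin p_0(x_0)$. The implication (1)$\Rightarrow$(3) survives verbatim for this stronger clause, and with it both the paper's verification and your construction of $q$ go through, since the strengthened clause is exactly what the troublesome subcase needs. As written, though, your justification of that subcase is false, so the proposal does not close.
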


\noindent In particular, a finite subset of $P_\Gamma$ has a common lower bound if and only if it consists of pairwise compatible conditions.

\begin{proof}
(2) implies (1) which implies (3) by the definition of the ordering. To show that (3) implies (1), fix the finite set $a\subset P_\Gamma$ and a point $x\in X$, assume that (3) holds, and work to find a common lower bound of $a$ which contains $x$ in its domain.

Let $b\subset X$ be a $\Gamma$-good countable set containing $\dom(p)$ for every $p\in a$ and the point $x$ as well. I will produce a lower bound $q$ of $a$ such that $b=\supp(q)$. To this end, let $d=b\setminus\bigcup_{p\in a}\dom(p)$. For every point $y\in d$, use Proposition~\ref{goodproposition} to find an open neighborhood $O_y\subset X$ of $y$ such that $O_y$ contains no point in $\bigcup_{p\in a}\dom(p)$ which is $\Gamma$-connected to $y$. Then, find an injection $r$ from $d$ to basic open subsets of $X$ such that for every point $y\in d$, $r(y)\subset O_y$ and $y\in r(y)$ holds. It will be enough to show that $q=r\cup\bigcup_{p\in a}p$ is a common lower bound of the set $a$.

First of all, argue that $q$ is a $\Gamma$-coloring. To see this, suppose that $x_0, x_1\in\dom(q)$ are distinct $\Gamma$-connected points. There are several cases.

\noindent\textbf{Case 1.} $x_0, x_1\in d$. In this case, $q(x_0)\neq q(x_1)$ since $q\restriction d$ is an injection.

\noindent\textbf{Case 2.} Exactly one point among $x_0, x_1$, say $x_0$ belongs to $d$. By the choice of the set $O_{x_0}$, $x_1\notin q(x_0)$ holds. At the same time, $x_1\in q(x_1)$ holds and therefore $q(x_0)\neq q(x_1)$ as desired.

\noindent\textbf{Case 3.} Neither $x_0$ nor $x_1$ belongs to $d$. Pick conditions $p_0, p_1\in a$ such that $x_0\in\dom(p_0)$ and $x_1\in\dom(p_1)$ holds. If either $x_0\in\dom(p_1)$ or $x_1\in\dom(p_0)$ holds, then $x_0, x_1$ receive distinct colors since each $p_0, p_1$ is separately a $\Gamma$ coloring. Otherwise, it must be the case that $x_0\in\dom(p_0\setminus p_1)$ and $x_1\in\dom(p_1\setminus p_0)$ holds, and then $q(x_0)\neq q(x_1)$ holds by (3) and the definition of the ordering $P_\Gamma$.

Second, show that for each $p\in a$, $q\leq p$ holds. To this end, let $y\in \dom(q\setminus p)$ be an arbitrary point; the value $q(y)$ must not contain any point $z\in\dom(p)$ which is $\Gamma$-connected to $y$. This is clear if $y$ belongs to the domain of some other condition in $a$ by (3). Otherwise, $y\in d$ holds and then $z\notin O_y$ and $z\notin q(y)$ holds by the choice of the set $O_y$. The proof is complete.
\end{proof}

\begin{proposition}
\label{complexityproposition}
$P_\Gamma$ is a Suslin forcing.
\end{proposition}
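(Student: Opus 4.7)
The plan is to exhibit $P_\Gamma$ as an analytic subset of a Polish coding space, with analytic ordering and incompatibility relation. A natural choice of coding space is $Y=(X\times\basis)^\gw$, where $\basis$ is a countable topology basis of $X$; a sequence $\langle (x_n,U_n)\rangle_n$ in $Y$ is interpreted as the partial coloring $p$ with $\dom(p)=\{x_n:n\in\gw\}$ and $p(x_n)=U_n$. The straightforward requirements on a code---pairwise distinctness of the $x_n$, $x_n\in U_n$, and the assignment being an actual $\Gamma$-coloring (distinct colors on $\Gamma$-connected pairs)---are each Borel, since $\Gamma$ is closed. The ordering $q\leq p$ amounts to $p\subseteq q$ together with a Borel avoidance condition: for each $x\in\dom(q)\setminus\dom(p)$, the open set $q(x)$ contains no $\Gamma$-neighbor of $x$ from $\dom(p)$. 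Incompatibility is then handled via Proposition~\ref{thirdp}, which reduces it to an analytic (in fact Borel) condition on pairs of codes.

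The crux is verifying analyticity of ``$\dom(p)$ is $\Gamma$-good''. Here I would invoke Theorem~\ref{definabilitytheorem} to obtain an analytic description of the $\Gamma$-topology in $F(X)$. Rewriting $\heartsuit(a)=\Gamma(a)\cap\{x\in X:\Gamma(a)\subseteq\Gamma(x)\}$ and using $\sigma$-compactness of $X$ to render the containment of closed sets Borel in parameters, $\heartsuit(a)$ is a closed subset of $X$ obtained as a Borel function of the finite parameter $a$, and is countable because it is a $\Gamma$-clique (by the no-uncountable-clique hypothesis). Goodness then asserts $\heartsuit(a)\subseteq\dom(p)$ for every finite $a\subseteq\dom(p)$, a countably indexed demand.

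The main obstacle is that this containment literally features a universal quantifier over $X$, yielding only $\Pi^1_1$ at face value. I would bypass this by exploiting the Noetherian decomposition extracted from the proof of Theorem~\ref{noetheriantheorem}: every set closed in the $\Gamma$-topology equals a finite union $\bigcup_i\Gamma(b_i)$ of basic $\Gamma$-neighborhoods with finite $b_i\subseteq X$. The idea is to witness goodness analytically by existentially guessing, uniformly in $a$, such a finite decomposition of $\heartsuit(a)$ together with an analytic reparametrization that realizes each $\Gamma(b_i)$ as an enumerated subfamily of the $x_n$'s. Combined with the analyticity of the $\Gamma$-topology in $F(X)$ furnished by Theorem~\ref{definabilitytheorem}, this converts the problematic universal quantifier into an existential one and presents $\Gamma$-goodness---and hence $P_\Gamma$ itself---as a $\Sigma^1_1$ subset of $Y$, completing the verification of Suslinness.
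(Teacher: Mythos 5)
Your set-up --- the coding space, the Borel conditions for being a suitable partial coloring, the Borel ordering, compatibility via Proposition~\ref{thirdp}, and the rewriting $x\in\heartsuit(a)$ iff $x\in\Gamma(a)$ and $\Gamma(a)\subseteq\Gamma(x)$, made Borel through the $F(X)$ machinery and $\gs$-compactness --- agrees with the paper up to the decisive point. The gap is exactly at the step you yourself flag as the crux, the analyticity of ``$\dom(p)$ is $\Gamma$-good''. Existentially guessing a finite decomposition $\heartsuit(a)=\bigcup_i\Gamma(b_i)$ together with enumerations of the sets $\Gamma(b_i)$ by points of $\dom(p)$ does not convert the universal quantifier into an existential one: to verify that a guessed enumeration exhausts $\Gamma(b_i)$ (equivalently, that $\heartsuit(a)$ is contained in the guessed subfamily of the $x_n$'s) you face precisely the same universally quantified statement over $X$ that you were trying to avoid. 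The containment cannot be read off in $F(X)$ as a relation between closed sets, because $\dom(p)$ and its subfamilies are countable but in general not closed; that reduction would only be available if the witnessing subfamily were finite, i.e.\ under a no-infinite-clique hypothesis, whereas Definition~\ref{coloringposetdefinition} and Proposition~\ref{complexityproposition} assume only that there is no uncountable clique, so $\heartsuit(a)$ may be countably infinite. Neither the Noetherian decomposition nor the analyticity of the $\Gamma$-topology addresses this; as stated the verification of your witness is circular.

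What the paper does instead, and what your argument is missing, is a uniformization step. For each $n$ the set $B_n=\{\langle y,x\rangle\in X^n\times X\colon x\in\heartsuit(\rng(y))\}$ is Borel (your $F(X)$ argument, or the paper's direct computation with the $K_\gs$ sets $C_O=\{y\colon\exists x\in O\ \forall z\in\rng(y)\ x\mathrel\Gamma z\lor x=z\}$, gives this), and its vertical sections are $\Gamma$-cliques, hence countable by hypothesis. By the Lusin--Novikov theorem, $B_n$ is covered by the graphs of countably many Borel functions. Goodness of $\dom(p)$ is then the statement that the countable enumerated domain is closed under these countably many Borel functions: for every $n$, every function in the list, and every $n$-tuple $y$ from the domain, the value (when it lies in $B_n$) equals some $x_m$. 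This is a countable conjunction of countable disjunctions of Borel conditions on the code, hence Borel. The point is that Lusin--Novikov supplies the enumeration of $\heartsuit(a)$ uniformly in $a$ and provably exhaustively, so nothing needs to be guessed and no exhaustiveness needs to be certified. With that replacement for your decomposition-guessing step, the rest of your proof goes through and matches the paper's.
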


\begin{proof}
The complexity calculation starts with an easy initial observation.

\begin{claim}
For every $n\in\gw$ the set $B_n\subset X^n\times X$ of all pairs $\langle y, x\rangle$ such that $x\in\heartsuit(\rng(y))$ is Borel.
\end{claim}

\begin{proof}
To show that the set $B_n$ is Borel, for every relatively open set $O\subset X$ define $C_O=\{y\in X^n\colon\exists x\in O\ \forall z\in\rng(y)\ x\mathrel \Gamma z\lor x=z\}$. Since the set $O\subset X$ is $K_\gs$, a compactness argument shows that $C_O\subset X^n$ is $K_\gs$ as well. Then $\langle y, x\rangle\in B_n$ if $\forall z\in\rng(y)\ x\mathrel\Gamma z\lor x=z$ and for every pair $O_0, O_1$ of disjoint basic open subsets of $X$ such that $O_0\times O_1\cap\Gamma=0$, either $x\notin O_0$ or $y\notin C_{O_1}$. This presents $B_n$ as a Borel set.
\end{proof}

\noindent Now, since vertical sections of the set $B_n$ are $\Gamma$-cliques, they are countable. By the Lusin--Novikov theorem, each set $B_n$ is a union of graphs of countably many Borel functions. It is clear then that conditions are exactly those $\Gamma$-colorings $p$ with countable domain such that the domain is closed under all said Borel functions, and for each $x\in\dom(p)$, $p(x)\subset X$ is a basic open neighborhood of $x$. This shows that the set of conditions in $P_\Gamma$ is Borel.

The ordering on $P_\Gamma$ is obviously a Borel set. Proposition~\ref{thirdp} provides a Borel characterization of compatibility of conditions in $P_\Gamma$, completing the proof.
\end{proof}

\begin{proposition}
$P_\Gamma$ forces the union of the generic filter to be a total $\Gamma$-coloring.
\end{proposition}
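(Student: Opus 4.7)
The plan is to show that $c := \bigcup G$, for $G$ a $P_\Gamma$-generic filter, is (i) a well-defined partial $\Gamma$-coloring, and (ii) total on $X$. Both parts are essentially immediate from Proposition~\ref{thirdp} and the $\sigma$-closure of $P_\Gamma$ established above, so I view this proposition as a corollary bundling the work already done.

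First I would verify that $c$ is a function and a $\Gamma$-coloring. Any two elements $p_0, p_1 \in G$ are compatible by the filter property, so by Proposition~\ref{thirdp} they have a common lower bound $q$; since the ordering satisfies $p_0, p_1 \subseteq q$, their union is a function, and hence $c$ is a function. For the coloring property, suppose $x_0, x_1 \in \dom(c)$ are $\Gamma$-connected, and pick $p_0, p_1 \in G$ with $x_i \in \dom(p_i)$. Let $q \in G$ extend both. Then $x_0, x_1 \in \dom(q)$, and since $q$ is itself a $\Gamma$-coloring by Definition~\ref{coloringposetdefinition}, $q(x_0) \neq q(x_1)$; therefore $c(x_0) \neq c(x_1)$.

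Next I would establish totality via a density argument. For each $x \in X$, set $D_x = \{p \in P_\Gamma : x \in \dom(p)\}$. Given any $p \in P_\Gamma$, Proposition~\ref{thirdp} applied to the singleton $a = \{p\}$ yields a condition $q \leq p$ with $x \in \dom(q)$, because (1) is trivially satisfied by $p$ itself and (2) is equivalent to it. Hence each $D_x$ is dense, and by genericity $G \cap D_x \neq \emptyset$, so $x \in \dom(c)$. Finally, $P_\Gamma$ is $\sigma$-closed and therefore adds no new reals; consequently $X^{V[G]} = X^V$, and $c$ is total on $X$ as interpreted in the extension.

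There is no substantive obstacle here beyond invoking the right clause of Proposition~\ref{thirdp}. The only point requiring a moment of care is that totality must be checked for every point of $X$ in the extension rather than just for ground-model points; this would be an issue if $P_\Gamma$ enlarged the Polish space $X$, but $\sigma$-closure of the poset rules that out.
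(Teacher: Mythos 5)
Your proof is correct and follows the paper's route: the paper's argument is exactly the density claim that for every $p\in P_\Gamma$ and $x\in X$ there is $q\leq p$ with $x\in\dom(q)$, obtained from Proposition~\ref{thirdp} applied to $a=\{p\}$. The additional verifications you include (that the union of the filter is a $\Gamma$-coloring, and that $\sigma$-closure prevents new points of $X$) are routine details the paper leaves implicit, so there is no substantive difference.
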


\begin{proof}
It is only necessary to show that for every condition $p\in P_\Gamma$ and every $x\in X$ there is a condition $q\leq p$ such that $x\in\dom(q)$. This follows from Proposition~\ref{thirdp} applied to the set $a=\{p\}$.
\end{proof}

\noindent It is now time to prove the instrumental amalgamation property of the coloring poset $P_\Gamma$. Recall \cite{z:noetherian} the following notions.

\begin{definition}
\begin{enumerate}
\item Let $\mathcal{T}$ be an analytic Noetherian topology on a $K_\gs$ Polish space $X$. If $M$ is a transitive model of ZFC containing the code for $\mathcal{T}$ and $A\subset X$ is a set, the symbol $C(M, A)$ denotes the smallest $\mathcal{T}$-closed set coded in $M$ which contains $A$ as a subset.
\item Generic extensions $V[H_0], V[H_1]$ are \emph{mutually Noetherian} if for every analytic Noetherian topology $\mathcal{T}$ on a $K_\gs$ Polish space $X$ coded in the ground model $V$ and for every set $A_1\subset X$ in $V[H_1]$, $C(V, A_1)=C(V[H_0], A_1)$ holds, and vice versa: for every set $A_0\subset X$ in $V[H_0]$, $C(V, A_0)=C(V[H_1], A_0)$ holds.
\end{enumerate}
\end{definition}

\noindent For example, mutually generic extensions are mutually Noetherian. 

\begin{definition}
Let $P$ be a Suslin poset.

\begin{enumerate}
\item A pair $\langle Q, \tau\rangle$ is \emph{Noetherian balanced} if $Q\Vdash\tau\in P$ and for every mutually Noetherian pair $V[H_0]$ $V[H_1]$ of generic extensions, all filters $G_0\subset Q$ in $V[H_0]$ and $G_1\subset Q$ in $V[H_1]$ generic over $V$, and conditions $p_0\leq \tau/G_0$ in $V[H_0]$ and $p_1\leq\tau/G_1$ in $V[H_1]$, the conditions $p_0, p_1\in P$ have a common lower bound. 
\item The poset $P$ is Noetherian balanced if for every condition $p\in P$ there is a Noetherian balanced pair $\langle Q, \tau\rangle$ such that $Q\Vdash\tau\leq \check p$.
\end{enumerate}
\end{definition}

\noindent Thus, Noetherian balance is a strengthening of the usual balance of Suslin posets \cite[Chapter 5]{z:geometric}. It is an amalgamation tool which for example rules out nonprincipal ultrafilters on $\gw$ in $P$-extensions of the choiceless Solovay model \cite{z:noetherian}. It is irrelevant for the main theorems of this paper, but it will be used in future work.

\begin{proposition}
\label{balanceproposition}
The poset $P_\Gamma$ is Noetherian balanced.
\end{proposition}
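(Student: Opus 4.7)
Fix a condition $p\in P_\Gamma$. My plan is to identify a total $\Gamma$-coloring $c$ of $X$ extending $p$ and use it to manufacture a Noetherian balanced pair.

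I would first construct $c\colon X\to\{\text{basic open subsets of }X\}$ by transfinite recursion along a well-ordering of $X$ that enumerates $\dom(p)$ first, using Proposition~\ref{goodproposition} at each successor stage to pick a basic open neighborhood $c(x)$ avoiding $\Gamma$-neighbors of $x$ among previously colored points. By the argument in the proof of Theorem~\ref{chromatictheorem}, this yields a total $\Gamma$-coloring extending $p$ whose restriction to any countable $\Gamma$-good $A\supseteq\dom(p)$ is a $P_\Gamma$-condition below $p$. The balanced pair will be $(Q,\tau)$, where $Q$ is the collapse $\Coll(\gw, X)$ and $\tau$ is the canonical $Q$-name for $c\restriction A$ for some countable $\Gamma$-good $A\supseteq X^V$ chosen in $V[G]$ (such $A$ exists because $X^V$ becomes countable in $V[G]$ and closing up under $\heartsuit$ preserves countability under the no-uncountable-cliques assumption). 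Then $Q\Vdash\tau\leq\check p$.

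To verify Noetherian balance, I would fix mutually Noetherian extensions $V[H_0]$ and $V[H_1]$, $Q$-generic filters $G_i\subset Q$ in $V[H_i]$, and conditions $p_i\leq\tau/G_i$, and show that $\{p_0,p_1\}$ has a lower bound via Proposition~\ref{thirdp}(3). Both $p_0,p_1$ extend $c$ as functions, so they agree on $X^V$. For agreement at any remaining common point $x\in\dom(p_0)\cap\dom(p_1)\setminus X^V$ and for the connected-pairs clause at $\Gamma$-connected $x_0\in\dom(p_0)\setminus\dom(p_1)$ and $x_1\in\dom(p_1)\setminus\dom(p_0)$, I would appeal to mutual Noetherian-ness of the extensions applied to the $\Gamma$-topology on $X$ (analytic in $V$ by Theorem~\ref{definabilitytheorem}). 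By Theorem~\ref{noetheriantheorem}'s description of Noetherian-closed sets as finite unions $\bigcup_i\Gamma(a_i)$, the $V$-coded Noetherian closure of any new vertex is an explicit finite union of $\Gamma$-neighborhoods from $V$, and the basic opens $p_0(x_0),p_1(x_1)$ chosen in the respective extensions to avoid $V$-coded $\Gamma$-neighbors should automatically avoid the opposite extension's new vertex via the closure agreement $C(V,\cdot)=C(V[H_{1-i}],\cdot)$.

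The main obstacle is the last step: carefully translating mutual Noetherian-ness into the conclusion that a basic open coded in $V$ and chosen in $V[H_0]$ to miss $V$-coded $\Gamma$-neighbors of $x_0$ also misses any new $V[H_1]$-point that is $\Gamma$-connected to $x_0$ across the extensions. This is precisely where the Noetherian analogue of mutual genericity is essential, and making this argument rigorous---likely via a direct analysis of how the $\Gamma(a)$-description of Noetherian-closed sets interacts with basic open neighborhoods coded in $V$---is the crux of the proof.
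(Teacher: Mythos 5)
Your overall strategy coincides with the paper's: build a total $\Gamma$-coloring $c$ extending $p$ whose values at points outside $\dom(p)$ avoid all $\Gamma$-neighbours lying in $\dom(p)$, take the collapse of the continuum with $\check c$ as the balanced pair, and verify compatibility of $p_0$ and $p_1$ through item (3) of Proposition~\ref{thirdp} using mutual Noetherian-ness of the (analytic) $\Gamma$-topology. However, the step you yourself flag as ``the main obstacle'' is exactly the content of the proposition, and you leave it open; as written this is a genuine gap. The paper closes it with a short argument whose orientation differs slightly from your sketch. Suppose $x_0\in\dom(p_0\setminus c)$ and $x_1\in\dom(p_1\setminus c)$ are $\Gamma$-connected and, say, $x_1\in p_0(x_0)$. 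Take the $\Gamma$-closure of the singleton $\{x_1\}$, not of $x_0$: the set $\Gamma(x_0)$ is $\Gamma$-closed, coded in $V[H_0]$, and contains $x_1$, so mutual Noetherian-ness gives $B_1:=C(V,\{x_1\})=C(V[H_0],\{x_1\})\subseteq\Gamma(x_0)$. Now $B_1$ and the basic open set $p_0(x_0)$ are both coded in $V$ and meet in $V[H_1]$ (witnessed by $x_1$), so Mostowski absoluteness between $V$ and $V[H_1]$ yields a ground-model point $y\in B_1\cap p_0(x_0)$. Since $y\in\Gamma(x_0)$ and $y\neq x_0$ (as $x_0\notin V$), the point $y$ belongs to $\dom(c)$, is $\Gamma$-connected to $x_0$, and lies in $p_0(x_0)$, contradicting $p_0\leq c$. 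Note the mechanism: the basic open set does not ``automatically avoid'' the other extension's vertex; rather, if it contained $x_1$, absoluteness applied to the $V$-coded set $B_1$ would force it to contain a ground-model $\Gamma$-neighbour of $x_0$, which is precisely what $p_0\leq c$ forbids. This is the missing argument, and without it your proof does not go through.

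Two smaller points. First, your construction of $c$ by recursion along an arbitrary well-ordering of $X$, choosing $c(x)$ via Proposition~\ref{goodproposition} to avoid previously colored neighbours, is not justified as stated: that proposition requires the set of previously colored points to be $\Gamma$-good, which arbitrary initial segments need not be. The paper instead uses goodness of $\dom(p)$ to fix, for each $x\notin\dom(p)$, a neighbourhood $d(x)$ missing the $\Gamma$-neighbours of $x$ in $\dom(p)$, and then invokes the full inductive machinery of Theorem~\ref{chromatictheorem} (increasing unions of $\Gamma$-good sets, induction on cardinality) to refine $d$ to a coloring; since you cite that theorem's argument this is repairable, but the recursion as you describe it is not correct. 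Second, replacing $\check c$ by a name for $c\restriction A$ with $A\supseteq X^V$ countable $\Gamma$-good changes nothing, since $\dom(c\restriction A)=X^V$ in any case; the issue you are trying to address (that $X^V$ be a $\Gamma$-good domain in the collapse extension) is simply taken as clear in the paper, which, like you, verifies only the connected-pair clause of Proposition~\ref{thirdp}(3).
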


\begin{proof}
Let $p\in P_\Gamma$ be a condition. I will show that there is a total $\Gamma$-coloring $c$ on $X$ such that $p\subset c$ and for every $x\in\dom(c\setminus p)$, the value $c(x)$ is a basic open subset of $X$ which contains no elements of $\dom(p)$ $\Gamma$-connected to $x$. Then I will show that the pair $\langle \coll(\gw, \mathbb{R}), \check c\rangle$ is a Noetherian balanced pair. This will prove the proposition.

To find $c$, first for every point $x\in X\setminus\dom(p)$ find an open neighborhood $d(x)\subset X$ containing $x$ and no points of $\dom(p)$ which are $\Gamma$-connected to $x$. This is possible by Proposition~\ref{goodproposition}. By the proof of Theorem~\ref{chromatictheorem}, there is a total $\Gamma$-coloring $e$ which to each point $x\in X$ assigns a basic open subset $e(x)\subset d(x)$ containing $x$. Then define $c$ by $c(x)=p(x)$ if $x\in\dom(p)$ and $c(x)=d(x)$ if $x\notin\dom(p)$; this coloring $c$ is as required.

It is clear that $\coll(\gw, \mathbb{R})\Vdash \check c\leq\check p$. Suppose now that $V[H_0], V[H_1]$ are mutually Noetherian extensions of $V$, each containing respective condition $p_0\leq c$ and $p_1\leq c$; I must show that $p_0, p_1$ are compatible. This means that item (3) of Proposition~\ref{thirdp} must be verified for $a=\{p_0, p_1\}$. Suppose that $x_0\in\dom(p_0\setminus c)$ and $x_1\in\dom(p_1\setminus c)$ are $\Gamma$-connected points, and towards a contradiction assume that (e.g.) $x_1\in p_0(x_0)$. The set $A_1=\{y\in X\colon y=x_0\lor y\mathrel\Gamma x_0\}$ is closed in the $\Gamma$-topology, coded in $V[H_0]$, and contains $x_1$. By the Noetherian assumption, the smallest closed in the $\Gamma$-topology set $B_1$ coded in $V$ containing $x_1$ is a subset of $A_1$. By Mostowski absoluteness between $V$ and $V[H_1]$, $B_1$ contains a ground model point $y$ in the basic open set $p(x_0)$. However, this contradicts the assumption that $p_0\leq c$ holds.
\end{proof}

\begin{corollary}
Let $\Gamma$ be a closed Noetherian graph on a $\gs$-compact Polish space without an uncountable clique. In the $P_\Gamma$-extension of the Solovay model,

\begin{enumerate}
\item there are no discontinuous homomorphisms between Polish groups;
\item there is no nonprincipal ultrafilter on $\gw$;
\item the Lebesgue null ideal is closed under well-ordered unions.
\end{enumerate}
\end{corollary}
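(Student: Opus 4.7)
The plan is to invoke the general preservation machinery developed in \cite{z:geometric, z:noetherian}. Proposition~\ref{balanceproposition} has already done the real work by establishing that $P_\Gamma$ is Noetherian balanced; all three conclusions are packaged consequences of (Noetherian) balance of a Suslin forcing over the choiceless Solovay model, so the proof is a matter of applying the right black-box theorem to each item.

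For item (2), this is the application explicitly flagged in the discussion preceding Proposition~\ref{balanceproposition} and carried out in \cite{z:noetherian}. Sketch of how I would reproduce it: given a $P_\Gamma$-name $\dot U$ in the Solovay model for a nonprincipal ultrafilter on $\gw$, fix a balanced virtual condition $\bar p$. Pick two mutually Cohen-generic extensions $V[H_0]$ and $V[H_1]$, which are mutually generic and hence mutually Noetherian. Use the definability of $\dot U$ from a real parameter and ordinals (Solovay style) to locate a set $A\subset\gw$ in $V[H_0]$ together with conditions $p_0\leq\bar p/G_0$, $p_1\leq\bar p/G_1$ that force opposite values of ``$A\in\dot U$''; the symmetry between $A$ and $\gw\setminus A$ makes this flipping possible. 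Noetherian balance then supplies a common lower bound $q\leq p_0, p_1$, yielding the contradiction.

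For items (1) and (3), these are standard consequences of ordinary balance for Suslin forcings over the Solovay model, taken directly from \cite[Chapter~11]{z:geometric}: automatic continuity of homomorphisms between Polish groups and closure of the Lebesgue null ideal under well-ordered unions are both on the standard preservation list, and since Noetherian balance strengthens balance, they apply here verbatim. The main obstacle, such as it is, is bibliographic: for each item one must check that the cited preservation theorem requires nothing more of the forcing than balance (with no extra hypotheses such as continuous reading of names or definable compatibility beyond what Proposition~\ref{complexityproposition} already provides). I expect this verification to be routine, so the corollary effectively reduces to a three-line appeal to the general theory.
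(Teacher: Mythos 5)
Your reduction of items (1) and (3) to ordinary balance has a genuine gap: neither statement is a consequence of balance alone, so the black-box theorems you appeal to do not exist in the form you need. Balance by itself is perfectly compatible with a nonprincipal ultrafilter in the extension --- the poset $\pomodfin$ is balanced (its balanced virtual conditions are exactly the nonprincipal ultrafilters), and its extension of the Solovay model contains the generic ultrafilter, hence also a discontinuous homomorphism from the Cantor group $\cantor$ to $2$ given by the prevailing value. Accordingly, for item (1) the paper does not cite balance alone but balance \emph{together with} the $3,2$-centeredness of $P_\Gamma$, via \cite[Theorem 13.2.1]{z:geometric}; that centeredness is precisely what Proposition~\ref{thirdp} delivers (any finite set of pairwise compatible conditions has a common lower bound), and it is the ingredient your write-up neither mentions nor verifies. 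Similarly, the paper derives item (3) from the Noetherian balance established in Proposition~\ref{balanceproposition}, citing \cite{z:noetherian}, not from plain balance; your assertion that closure of the null ideal under well-ordered unions is on the ``standard preservation list'' for merely balanced posets is unsupported, and the $\pomodfin$ example shows one should not expect preservation theorems of this kind without hypotheses beyond balance.

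For item (2), deferring to \cite{z:noetherian} is legitimate (the paper offers that route as an alternative; its primary argument is shorter still: (2) follows from (1), since an ultrafilter yields the prevailing-value homomorphism $\cantor\to 2$), but your sketch would not reproduce the cited proof. If the flipping argument worked with a \emph{mutually generic} pair $V[H_0], V[H_1]$, it would show that every balanced Suslin poset kills nonprincipal ultrafilters, which is false by the $\pomodfin$ example; moreover, a condition $p_1\in V[H_1]$ cannot force ``$A\in\dot U$'' or its negation for a set $A$ available only in $V[H_0]$. The point of Noetherian balance is that amalgamation persists for pairs of extensions that are mutually Noetherian but \emph{not} mutually generic, and pairs of that kind are what the argument in \cite{z:noetherian} must exploit.
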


\begin{proof}
Item (1) follows from the balance and the $3,2$-centeredness of the poset $P_\Gamma$ as proved in \cite[Theorem 13.2.1]{z:geometric}. (2) follows from (1), since a nonprincipal ultrafilter $U$ on $\gw$ yields a discontinuous homomorphism from the Cantor group $\cantor$ to $2$ assigning to any point $x\in\cantor$ its prevailing value. However, (2) also follows from the Noetherian balance of the poset $P_\Gamma$ by \cite{z:noetherian}. (3) follows from the Noetherian balance of $P_\Gamma$ again \cite{z:noetherian}.
\end{proof}

\noindent Properties of the $P_\Gamma$ extension of the Solovay model pertaining to countable Borel equivalence relations must be checked by the methods of the following sections.

\section{The control poset}
\label{controlsection}

This section discussed the main technical tool used to control the generic extension of the Solovay model by the balanced coloring poset $P_\Gamma$ obtained in Section~\ref{balancedsection}.

\begin{definition}
\label{controldefinition}
Let $\Gamma$ be a closed graph on a Polish space $X$. The \emph{$\Gamma$-control poset} $Q_\Gamma$ is the poset of all finite partial $\Gamma$-colorings $q\colon X\to \gw$ ordered by reverse inclusion.
\end{definition}

\noindent It is a well-known result of Todorcevic \cite[Proposition 1]{todorcevic:examples} that the control poset is c.c.c.\ if and only if the graph $\Gamma$ does not contain a perfect clique. I need to precisely quantify the nature of c.c.c.\ of the control posets in case that the graph $\Gamma$ is closed, Noetherian, and contains no infinite cliques. The following common parlance will be useful throughout.

\begin{definition}
Let $\Gamma$ be a closed graph on a Polish space $X$.
\begin{enumerate}
\item A \emph{location} is a pair $\langle a, f\rangle$ where $a$ is a finite collection of pairwise disjoint basic open subsets of $X$ and $f\colon a\to\gw$ is a function such that for any two distinct open sets $O_0, O_1\in a$, either $(O_0\times O_1)\cap\Gamma=0$ or  $f(O_0)\neq f(O_1)$ holds;
\item a function $q$ such that $\dom(q)$ is a selector in $a$ and for every $x\in\dom(q)$ $q(x)=f(O)$ for the unique point $O\in a$ containing $x$ is \emph{at location} $\langle a, f\rangle$;
\item if a location $\langle a, f\rangle$ is given, for any condition $q$ at this location and any $O\in a$ the symbol $q(O)$ denotes the unique point in the domain of $q$ which belongs to $O$.
\end{enumerate}
\end{definition}

\noindent  Note that every function as in (2) is a condition in the control poset $Q_\Gamma$. The first theorem evaluates the descriptive complexity of the control poset. 

\begin{definition}
A c.c.c.\ poset $Q$ is \emph{very Suslin} if 

\begin{enumerate}
\item $Q$ is Suslin. That is, in an ambient Polish space the set $Q$ is analytic, and the relations of ordering and incompatibility are analytic as well;
\item the set $\{d\in Q^\gw\colon\rng(d)\subset Q$ is a predense set$\}$ is analytic.
\end{enumerate}
\end{definition}

\noindent Very Suslin c.c.c.\ forcings do not add dominating reals \cite{Sh:711}, and their complexity is preserved under finite support iterations of countable length.

\begin{theorem}
\label{verysuslintheorem}
If $\Gamma$ is a closed Noetherian graph on a Polish $\gs$-compact space $X$ without an infinite clique, then the poset $Q_\Gamma$ is very Suslin.
\end{theorem}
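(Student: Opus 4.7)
The statement has two parts. For the Suslin structure, conditions of $Q_\Gamma$ are finite partial $\Gamma$-colorings $q: X \to \gw$, coded in a natural Polish space. The $\Gamma$-coloring constraint is Borel since $\Gamma$ is closed, as are reverse inclusion and the compatibility relation (compatibility of $q_0, q_1$ means their union is still a $\Gamma$-coloring).

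For the analyticity of predensity, the direct expression ``for every $q \in Q_\Gamma$ some $d_n$ is compatible with $q$'' is $\Pi^1_1$, so a positive reformulation is needed. I would use the location decomposition. Every condition lies at some location $(a, f)$ and there are only countably many locations; conditions at $(a, f)$ are parametrized by selectors $x \in \prod_{O \in a} O$. Predensity of $\langle d_n : n \in \gw \rangle$ becomes: for every location $(a, f)$, the bad set $\bigcap_n C_n$ --- where $C_n \subseteq \prod_O O$ collects selectors whose induced condition $q_x$ conflicts with $d_n$ --- is empty. The sought analytic reformulation is: for every location there exists finite $F \subseteq \gw$ with $\bigcap_{n \in F} C_n = \emptyset$, where emptiness of the finite intersection is Borel in the data.

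For finite $F$, $\bigcap_{n \in F} C_n$ unpacks explicitly. A conflict of $q_x$ with $d_n$ at a pair $(O, y)$ with $y \in \dom(d_n)$ is either an edge conflict ($x(O) \mathrel\Gamma y$ and $d_n(y) = f(O)$) or a value conflict ($x(O) = y \in O$ and $d_n(y) \neq f(O)$). Distributing out yields $\bigcap_{n \in F} C_n = \bigcup_\sigma \prod_{O \in a} S^\sigma_O$, a finite union over assignments $\sigma$ of one conflict pair to each $n \in F$; each factor $S^\sigma_O$ collects the constraints $\sigma$ puts on coordinate $O$, and is either a singleton (from a value constraint, possibly consistent with accompanying edge constraints), a graph-closed set $\Gamma(b^O) \cap O$ for finite $b^O$ (from edge constraints only), or empty (inconsistent constraints). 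Emptiness of $\Gamma(b) \cap O$ reduces via $\gs$-compactness of $X$ to a Borel statement --- the $\gs$-compact set $O$ is covered by the open family $\{X \setminus \Gamma(y) : y \in b\}$, checkable compact-by-compact --- and emptiness of the finite union is then a finite conjunction of Borel conditions.

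The finite-witness reformulation $\bigcap_n C_n = \emptyset \Longleftrightarrow \exists F\ \bigcap_{n \in F} C_n = \emptyset$ uses the Noetherian property. The edge-conflict part of each $C_n$ is graph-closed in the product $\Gamma$-topology on $\prod_O O$, which is Noetherian as a finite product of Noetherian topologies, so decreasing intersections stabilize. The main obstacle is the value-conflict parts, which are singletons $\{y\}$ not in general graph-closed. I would dispose of this by finite stratification: for a candidate finite $F$, the relevant value-conflict points lie in $E_F := \bigcup_{n \in F} \dom(d_n)$, a finite set; stratify $\prod_O O$ by specifying for each $O \in a$ whether $x(O) \in E_F \cap O$ (finitely many fixed-coordinate branches, on each of which we recurse to a smaller location) or $x(O) \notin E_F$ (eliminating all value conflicts with $F$ at coordinate $O$, so only graph-closed edge constraints remain, and the Noetherian argument applies cleanly). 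With finitely many leaves per $F$ and Borel emptiness on each leaf, $\bigcap_{n \in F} C_n = \emptyset$ is Borel, and predensity becomes a countable conjunction over locations of a countable disjunction over $F$ of Borel conditions --- analytic as required.
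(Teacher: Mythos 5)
The central step of your plan is the equivalence $\bigcap_n C_n=\emptyset\iff\exists F\text{ finite }\bigcap_{n\in F}C_n=\emptyset$, and this is exactly where your argument has a genuine gap. The nontrivial direction (empty total intersection implies some finite subintersection is empty) is a compactness assertion about sets that are \emph{not} graph-closed: the edge-conflict set at a pair $(O,y)$ is the cylinder over $(\Gamma(y)\setminus\{y\})\cap O$, and the puncture at the center $y$, together with the value-conflict singletons, is precisely what can destroy compactness. Your justification addresses only the closed parts (Noetherian stabilization in the product graph topology), and your stratification by membership in $E_F$ presupposes the finite set $F$ that this direction is supposed to produce, so it cannot carry any weight here. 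Moreover, your sketch never uses the no-infinite-clique hypothesis, and the equivalence genuinely needs it: for a closed Noetherian graph that is a complete graph on a countable discrete closed set $\{z_k\}$, with $d_n=\{(z_n,f(O))\}$ and a basic open $O$ containing all $z_k$, one gets $E_n\supseteq\{z_k:k\neq n\}$ with every finite intersection nonempty while $\bigcap_nE_n=\emptyset$; so some use of the clique bound is unavoidable, just as in the paper's proof (where it appears when the set $C_x\cap O\cap b$ is observed to be a finite clique).

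The equivalence is in fact true under the stated hypotheses, but proving it requires the missing ingredients: reduce by a finitely-branching (K\"onig) argument to a single coordinate $O$; given points $w_k\in\bigcap_{n<k}E_{n,O}$, extract by pigeonhole over the finite sets $\dom(d_n)$ witnesses $y^*_n$ with $w_k\mathrel\Gamma y^*_n$ for infinitely many $k$; use the Noetherian property to stabilize $\bigcap_{n<m}\Gamma(y^*_n)$ at a graph-closed set $H$ containing infinitely many $w_k$; and then use the no-infinite-clique hypothesis to rule out that every such $w_k$ coincides with some $y^*_n$ (otherwise infinitely many $y^*_n$ lie in $H$ and form an infinite clique), yielding a point of $\bigcap_nE_{n,O}$. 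Without some such argument the reformulation on which your analyticity computation rests is unjustified. For comparison, the paper avoids fiberwise compactness altogether: it uses Kuratowski--Ryll-Nardzewski selections to build, Borel-in-$d$, a countable test set $c$ closed under dense subsets of the graph-closed sets generated by $\bigcup_n\dom(d_n)$, and shows by a substitution argument (using a minimal $\Gamma(b')$ and finiteness of a resulting clique) that non-predensity is always witnessed by a condition with domain inside $c$; your route, if completed as above, would be a legitimate alternative, but as written the key lemma is unproved and its stated justification would fail.
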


\begin{proof}
Suslinness of $Q_\Gamma$ follows immediately from the definitions. To start, use Kuratowski--Ryll-Nardzewski theorem \cite[Theorem 12.13]{kechris:classical} to find Borel functions $f_n\colon F(X)\to X$ for $n\in\gw$ such that for every nonempty closed set $C\subset X$, the set $\{f_n(C)\colon n\in\gw\}$ is a dense subset of $C$. Now, let $d\in\mathbb{Q}_\Gamma^\gw$ be a sequence of conditions. Write $b=\bigcup\{\dom(q)\colon q\in\rng(d)\}$ and $c=b\cup\bigcup\{f_n(\Gamma(a))\colon n\in\gw, a\in [b]^{<\aleph_0}\}$. I claim that $\rng(d)\subset Q_\Gamma$ is predense if and only if every condition whose domain is a subset of $c$ is compatible with some element of $\rng(d)$.

First, note that this will show that the set $\{d\in Q^\gw\colon\rng(d)\subset Q$ is a predense set$\}$ is analytic. The universal quantifier on the right side of the equivalence is restricted to a countable subset of $Q_\Gamma$ which is obtained in a Borel way from $d$.

Now, the left-to-right implication is immediate. The right-to-left implication is proved in the contrapositive. Suppose that  
the set $\rng(d)$ is not predense and work to find a condition which is incompatible to all elements of $\rng(d)$, whose domain is a subset of $c$. Let $q\in Q_\Gamma$ be any condition incompatible with all elements of $\rng(d)$.  For each point $x\in\dom(q)\setminus b$, let $b'\subset b$ be a finite set of points $\Gamma$-connected to $x$ and such that the set $C_x=\Gamma(b')$ is as small as possible. Such a set is possible to find by the Noetherian assumption and Theorem~\ref{noetheriantheorem}. 

Now, let $\langle a, f\rangle$ be any location of the condition $q$. Define a condition $r$ at the same location using the following description.

\begin{itemize}
\item if $O\in a$ is such that $q(O)\in b$ then let $r(O)=q(O)$;
\item if $O\in a$ is such that $x=q(O)\notin b$ and there is a point $y\in C_x\cap O\cap b$ which is $\Gamma$-disconnected from $x$, then let $r(O)=y$;
\item finally, suppose that $O\in a$ is such that $x=q(O)\notin b$ and all points of $C_x\cap O\cap b$ are $\Gamma$-connected to $x$. In such a case, $C_x\cap O\cap b$ is a $\Gamma$-clique by the minimal choice of $C_x$. By the initial assumptions on the graph $\Gamma$, this clique is finite. Note that $x\in C_x$ holds and $x$ does not belong to the finite set $C_x\cap O\cap b$, simply because $x\notin b$ holds. Since the set $c\cap C_x$ is dense in $C_x$, there is a point $y\in c\cap C_x\cap O$ which does not contain any elements of $C_x\cap O\cap b$. Let $r(O)=y$.
\end{itemize}

\noindent To complete the proof, argue that $r$ is incompatible with all conditions in $\rng(d)$. To do that, suppose that $s\in\rng(d)$ is a condition. Then $s, q$ are incompatible, and there must be points $x\in\dom(q)$ and $z\in\dom(s)$ such that either $x=z$ and $q(x)\neq s(z)$, or $x\mathrel\Gamma z$ and $q(x)=s(z)$. The discussion now splits into cases.

If $x\in b$ then $s, r$ are incompatible by virtue of the same points $x, z$ since $q\restriction b\subseteq r\restriction b$ by the first item above. If $x\notin b$, then let $O\in a$ be the unique open set which contains it, and let $y=r(O)$. No matter whether the second or third item above occurred, the minimality of $C_x$, the fact that $y\in C_x$, and the fact that $z\in b$ guarantee that $z=y$ or $z\mathrel\Gamma y$ holds. It will be enough to rule out the possibility that $z=y$ holds, since then $s, r$ are incompatible by virtue of the points $y, z$. Now, if the second item above occurred for $O$, $z=y$ is impossible since $y\mathrel\Gamma x$ fails in that case while $z\mathrel\Gamma x$ holds. If the third item above occurred for $O$, then $y\notin b$ and $y$ must again be distinct from $z$ since $z\in b$ holds. The proof is complete.
\end{proof}

\noindent Perhaps more importantly, the Noetherian property of the graph $\Gamma$ has implications for the centeredness properties of the control poset $Q_\Gamma$. Fine distinctions are important here, and we restate the definitions of \cite[Chapter 11]{z:geometric}:

\begin{definition}
Let $Q$ be a poset.

\begin{enumerate}
\item A set $A\subset Q$ is \emph{liminf-centered} if for every infinite set $a\subset A$ there is a condition $q\in Q$ which forces the generic filter to contain infinitely many elements of $a$. 
\item If $Q$ is a Suslin poset, it is \emph{Suslin-$\gs$-liminf-centered} if there are analytic liminf-centered sets $A_n\subset Q$ for $n\in\gw$ such that $Q=\bigcup_nA_n$.
\end{enumerate}
\end{definition}

\noindent Note that a liminf centered set cannot contain an infinite antichain, so $\gs$-liminf centeredness implies c.c.c.

\begin{theorem}
\label{liminfcenteredtheorem}
Suppose that $\Gamma$ is a closed Noetherian graph on a Polish space $X$, without an infinite clique. Then the control poset $Q_\Gamma$ is Suslin-$\gs$-liminf centered.
\end{theorem}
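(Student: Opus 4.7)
My plan is to decompose $Q_\Gamma$ by location, extract a favorable subsequence, construct a witness condition $r$ using Noetherian stabilization together with a blocking of a certain subset $V_O$, and verify the liminf-centeredness property via a global pigeonhole argument. For the decomposition, I would group conditions by location $\langle a,f\rangle$: each class $A_{\langle a,f\rangle}$ is Borel, only countably many locations exist, and every condition admits one (closedness of $\Gamma$ separates same-color points by disjoint basic opens with no $\Gamma$-cross-edges). Given an infinite pairwise distinct sequence $(q_i)_{i\in\gw}\subseteq A_{\langle a,f\rangle}$, I would thin by Ramsey and the no-infinite-clique hypothesis to an infinite $I$ so that for each $O\in a$ the projection $(q_i(O))_{i\in I}$ is either a constant $z_O$ (then $O\in a_0$) or pairwise distinct and a $\Gamma$-anticlique (then $O\in a_1$, nonempty by distinctness). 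For each $O\in a_1$, Theorem~\ref{noetheriantheorem} yields a finite $b_O\subseteq\{q_i(O):i\in I\}$ with $\Gamma(b_O)=\bigcap_{i\in I}\Gamma(q_i(O))$.

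\emph{Blocking and candidate condition.} The crucial auxiliary set is $V_O=\{y\in X:\{i\in I:y\mathrel\Gamma q_i(O)\}\text{ is cofinite in }I\}$. The key combinatorial lemma I would prove is that a finite $F_O\subseteq I$ can be chosen so that every $y\in V_O$ is $\Gamma$-adjacent to some $q_j(O)$ for $j\in F_O$. I would extract by Ramsey an infinite $\Gamma$-anticlique out of $V_O$ (no infinite clique), then Delta-system-analyze the finite exception sets $I\setminus\{i:y\mathrel\Gamma q_i(O)\}$: a system with nonempty pairwise-disjoint leaves would embed, together with the anticlique $\{q_i(O)\}$, a variation of the three-quarter graph---forbidden by Noetherianness. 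Only finitely many exception sets therefore appear in $V_O$, each handled by individual adjacency witnesses (for a finite exception-type) or by one index outside the common exception set (for an infinite anticlique type). I would then define $r$ with domain $\{z_O:O\in a_0\}\cup\bigcup_{O\in a_1}\bigl(b_O\cup\{q_j(O):j\in F_O\}\bigr)$, assigning $f(O)$ to each entry in its $O$-component; the anticlique structure inside each component plus the location compatibility of $f$ makes $r$ a valid $\Gamma$-coloring.

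\emph{Verification via pigeonhole.} For any $r'\leq r$, incompatibility of $r'$ with some $q_i$ traces to a pair $(y,O)$ with $y\in\dom(r')$ causing either agreement-violation ($y=q_i(O)$ with $r'(y)\neq f(O)$) or coloring-violation ($y\mathrel\Gamma q_i(O)$ with $r'(y)=f(O)$). The commitments of $r$ reduce agreement-violations to finitely many $i$ and exclude coloring-violations from $O\in a_0$; a coloring-violating $y$ for $O\in a_1$ must be $\Gamma$-disjoint from $b_O\cup\{q_j(O):j\in F_O\}$, which the blocking construction forces into $y\notin V_O$. If cofinitely many $i\in I$ were bad, pigeonholing over the finite set of such pairs $(y,O)$ would yield some $(y^*,O^*)$ witnessing the coloring-violation for cofinitely many $i$, i.e.\ $y^*\in V_{O^*}$---contradicting the above. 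Hence compatible $i$'s form an infinite subset of $I$, and $r$ forces the generic filter to contain infinitely many $q_i$.

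\emph{Main obstacle.} The technical heart is the Noetherian blocking lemma for $V_O$: ruling out Delta-systems of distinct exception sets by embedding variations of the three-quarter graph, and confirming that only finitely many exception-types can appear in $V_O$ without violating Noetherianness. This parallels Claim~\ref{claim1} of Theorem~\ref{noetheriantheorem} but involves careful bookkeeping of bipartite adjacencies between $V_O$ and $\{q_i(O)\}_{i\in I}$; this is where I would spend the most effort.
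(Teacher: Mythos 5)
Your high-level architecture (split $Q_\Gamma$ by location, thin the given sequence coordinatewise, produce one condition forcing infinitely many $q_i$ into the filter) is the same as the paper's, but the verification step contains a genuine gap. You only block the set $V_O=\{y\in X\colon y \text{ is } \Gamma\text{-adjacent to cofinitely many } q_i(O)\}$, while your concluding pigeonhole cannot deliver a point of $V_{O^*}$. Indeed, assuming cofinitely many $i\in I$ are incompatible with $r'$, assigning to each such $i$ a witnessing pair from the finite set $\dom(r')\times a_1$ only produces a pair $(y^*,O^*)$ that witnesses a coloring-violation for \emph{infinitely} many $i$, not cofinitely many; so you only learn that $y^*$ is adjacent to infinitely many of the $q_i(O^*)$, which does not put $y^*$ into $V_{O^*}$, and no contradiction with the choice of $F_{O^*}$ follows. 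Moreover this cannot be repaired by strengthening the blocking lemma to points adjacent to infinitely many $q_i(O)$: that statement is false for Noetherian graphs without infinite cliques. Take pairwise disjoint infinite sets $A_n\subset\gw$ with $\min A_n\to\infty$, vertices $x_i$ ($i\in\gw$) and $y_n$ ($n\in\gw$), and edges $y_n\mathrel\Gamma x_i$ exactly when $i\in A_n$ (no other edges); realized on a countable closed discrete subset of $\mathbb{R}$ this is a closed graph, it has no triangles, and one checks it induces no variation of the half or three-quarter graph (in any such variation one side would need vertices of infinite degree, which forces that side into the $y$'s, and then the required cross-adjacencies are impossible since each $x_i$ meets at most one $A_n$ and the $y$'s are pairwise nonadjacent); yet each $y_n$ is adjacent to infinitely many $x_i$ and no finite set of indices blocks all $y_n$. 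The missing idea is exactly the paper's key claim: using Theorem~\ref{noetheriantheorem}, choose a $\Gamma$-topology-minimal closed set $C$ meeting $\{q_i(O)\colon i\in I\}$ in an infinite set and thin the family accordingly, so that afterwards every vertex of $X$ is adjacent to either finitely many or \emph{all} of the remaining $q_i(O)$. After that thinning the infinite/cofinite distinction disappears, and in fact your auxiliary condition $r$, the sets $b_O$, and the blocking sets $F_O$ become unnecessary: the paper shows that $q_0$ of the thinned family already forces infinitely many of the $q_i$ into the generic filter.

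A secondary, fixable defect: even for the cofinite version, your proof of the blocking lemma is incomplete. The Delta-system (sunflower) extraction for an infinite family of finite exception sets is only available when their cardinalities are bounded; a strictly increasing chain of exception sets, for instance, contains no infinite system with nonempty pairwise disjoint petals, yet infinitely many distinct exception sets do occur. In the unbounded case one must instead pick $i_{n+1}\in E_{n+1}\setminus\bigcup_{m\le n}E_m$ and apply Ramsey to the undetermined pairs, which yields a variation of the \emph{half} graph (or of the three-quarter graph); so this case needs a separate argument, parallel to Claim~\ref{claim1}. This can be patched, but the pigeonhole mismatch described above is the essential problem with the proposal as written.
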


\begin{proof}
Let $\langle a, f\rangle$ be a location. It is enough to show that the set $A$ of all conditions at that location is liminf-centered.
To this end, the following abstract claim will be useful.

\begin{claim}
Let $b\subset X$ be an infinite set. There is an infinite set $c\subset b$ such that each element of $X$ is either $\Gamma$-connected with only finitely many elements of $c$ or with all elements of $c$.
\end{claim}

\begin{proof}
First use the Ramsey theorem to shrink the set $b$ if necessary to a $\Gamma$-anticlique. Use Theorem~\ref{noetheriantheorem} to find an inclusion-minimal set $C\subset X$ in the $\Gamma$-graph topology such that $c=C\cap b$ is infinite. I claim that the set $c$ works.

Indeed, suppose that $x\in X$ is a point. If $x\in c$, then $x$ is $\Gamma$-disconnected from all other elements of $c$ since $c$ is a $\Gamma$-anticlique. If $x\notin c$ and $x$ is $\Gamma$-connected to infinitely many elements of $c$, then $C=C\cap\Gamma(x)$ by the minimal choice of $C$ and therefore $x$ is $\Gamma$-connected to all elements of $c$. The claim follows.
\end{proof}

\noindent Now, let $A=\{q_n\colon n\in\gw\}$ be an infinite collection of conditions at location $\langle a, f\rangle$. Shrinking the collection if necessary, it is possible to find a partition $a=a_0\cup a_1$ such that for every $O\in a_0$ the points $q_n(O)$ for $n\in\gw$ are all the same, while for every set $O\in a_1$ the points $q_n(O)$ for $n\in\gw$ are pairwise distinct. Using the claim repeatedly, it is possible to further shrink the set $A$ so that for every $O\in a_1$, every point in $X$ is $\Gamma$-connected to only finitely many points among $\{q_n(O)\colon n\in\gw\}$ or to all of them. It will be enough to show that $q_0$ forces that the generic filter contains infinitely many conditions in the set $A$.

To see this, suppose that $r\leq q_0$ is a condition; it will be enough to show that $r$ is compatible with all but finitely many conditions in $A$. Indeed, suppose that $n\in\gw$ is large enough so that for every point $x\in\dom(r)$ and every $O\in a_1$, $q_n(O)\neq x$ and if $x$ is $\Gamma$-connected to only finitely many elements of $q_n(O)$, then it is not $\Gamma$-connected to $q_n(O)$. It will be enough to show that $r$ is compatible with $q_n$. For this, note that $q_n\cup r$ is a function by the choice of the number $n$. To show that $q_n\cup r$ is a $\Gamma$-coloring, suppose that $x\in\dom(r)$ and $O\in a_1$ is a set and write $y=q_n(O)$. I must show that $x\mathrel\Gamma y$ implies $r(x)\neq q_n(y)$. To see this, note that $x\mathrel\Gamma y$ implies that $x$ is $\Gamma$-connected with all points in the set $\{q_i(O)\colon i\in\gw\}$ by the choice of the number $n$. In particular, $x$ is $\Gamma$-connected to $z=q_0(O)$, and since $r\leq q_0$ holds, $r(x)\neq q_0(z)$. Now, the conditions $q_0$ and $q_n$ are at the same location and therefore $q_0(z)=q_n(y)$. It follows that the values $r(x)$ and $q_n(y)$ are distinct as required.
\end{proof}

\begin{definition}
\label{ramseydefinition}
Let $Q$ be a poset.

\begin{enumerate}
\item A set $A\subset Q$ is \emph{Ramsey centered} if for every $m\in\gw$ there is $n\in\gw$ such that every collection of $n$ many elements of $A$ contains a subcollection of cardinality $m$ which has a common lower bound.
\item If $Q$ is a Suslin poset, it is \emph{Suslin-$\gs$-Ramsey-centered} if there are analytic Ramsey-centered sets $A_n\subset Q$ for $n\in\gw$ such that $Q=\bigcup_nA_n$.
\end{enumerate}
\end{definition}

\noindent Note that a Ramsey-centered set cannot contain an infinite antichain, so $\gs$-Ramsey centeredness implies c.c.c.\ again.

\begin{theorem}
\label{ramseycenteredtheorem}
Suppose that $m\in\gw$ is a number and $\Gamma$ is a closed graph on a Polish space $X$, containing no cliques of cardinality $m$. Then the control poset $Q_\Gamma$ is Suslin-$\gs$-Ramsey centered.
\end{theorem}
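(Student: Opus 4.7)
The plan is to imitate the location-based partition used in Theorem~\ref{liminfcenteredtheorem}: for each location $\langle a, f\rangle$, let $A_{\langle a, f\rangle}$ denote the set of conditions at that location. Since $X$ is second countable, there are only countably many locations, and each $A_{\langle a, f\rangle}$ is Borel, being parameterized by partial selectors of $a$. It therefore suffices to prove that every such set is Ramsey centered. Fix a location with $|a|=s$ and a target size $m\in\gw$; I will exhibit a number $n=n(s,m)$ such that any $n$ conditions drawn from $A_{\langle a, f\rangle}$ contain a subcollection of size $m$ with a common lower bound.

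Given conditions $q_0,\dots,q_{n-1}$ at the fixed location, first apply the finite Ramsey theorem to the $2^s$-coloring of indices which records, for each $i$, the set $\{O\in a\colon q_i(O)\text{ is defined}\}$; this yields a large subset $I_0\subseteq n$ of indices homogeneous in ``$O$-activity'' for every $O\in a$. Next, for each $O\in a$ on which the indices in $I_0$ are active, $2$-color the unordered pairs $\{i,j\}\subset I_0$ by whether $q_i(O)$ and $q_j(O)$ are $\Gamma$-connected (treating the equal case as ``not connected'', since $\Gamma$ is irreflexive). Iterated Ramsey, once per $O\in a$, extracts $I_s\subseteq I_0$ simultaneously homogeneous for all $s$ of these $2$-colorings. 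The ``connected'' color is impossible on a sufficiently large homogeneous set, since it would produce a $\Gamma$-clique of cardinality $m$, contradicting the hypothesis on $\Gamma$. Standard iterated Ramsey bounds then determine $n=n(s,m)$ so that $|I_s|\geq m$.

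It remains to verify that the union $r=\bigcup_{i\in I_s}q_i$ is a condition extending each $q_i$ with $i\in I_s$. Since all the $q_i$'s share the same location, $r$ is a finite function assigning each point in its domain the color $f(O)$ for the unique $O\in a$ containing it. To see $r$ is a $\Gamma$-coloring, within a single $O\in a$ any two distinct points in $\rng(r)\cap O$ are $\Gamma$-disconnected by the second Ramsey step, so their shared color $f(O)$ causes no conflict; across distinct $O, O'\in a$ with $f(O)=f(O')$, the definition of ``location'' forces $(O\times O')\cap\Gamma=\emptyset$; and for $f(O)\neq f(O')$ the two colors already differ. The only real obstacle is bookkeeping the iterated Ramsey reduction and keeping the dependence of $n$ on $s$ and $m$ explicit; the combinatorial content beyond that is the clean observation that absence of an $m$-clique kills the bad homogeneous color at every stage, so that no structural property of $\Gamma$ beyond the hypothesis is needed.
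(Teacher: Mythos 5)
Your proposal is correct and follows essentially the same route as the paper: partition $Q_\Gamma$ into the (countably many, Borel) sets of conditions at a fixed location and use finite Ramsey to exclude the homogeneous color that would produce an $m$-clique, so that the surviving conditions are pairwise non-conflicting and their union is a common lower bound. The only difference is cosmetic: the paper applies one Ramsey instance with $s+1$ colors (one color per $O\in a$ witnessing incompatibility, plus a color for compatible pairs), whereas you iterate $2$-color Ramsey once per $O\in a$ (plus a preliminary step for domain activity), which yields the same conclusion with slightly worse bounds.
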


\noindent Note that this theorem does not refer to any Noetherian assumption; indeed, its proof is much easier than that of Theorem~\ref{liminfcenteredtheorem}.

\begin{proof}
Let $\langle a, f\rangle$ be a location. It is enough to show that the set $A$ of all conditions at that location is Ramsey-centered. To this end, let $m\in\gw$ be a number; increasing $m$ if necessary assume that $\Gamma$ contains no cliques of cardinality $m$. Writing $s$ for the cardinality of the set $A$, any natural number $k$ such that $k\to (m)^2_{s+1}$ witnesses Ramsey centeredness of the set $A$ for the number $m$. To see this, let $q_i\colon i\in k\rangle$ be a collection of conditions in $A$. Define a map $c$ with domain $[k]^2$ by setting $c(i, j)=O$ if the conditions $q_i, q_j$ are incompatible and this fact is witnessed by the open set $O\in a$ in the sense that the unique elements $x_i, x_j$ in $\dom(q_i)\cap O$ and $\dom(q_j)\cap O$ are $\Gamma$-connected. Define $c(i, j)=!$ if the conditions $q_i, q_j$ are compatible. 

By the choice of the number $k$ there is a set $b\subset k$ of cardinality $m$ homogeneous for the partition $c$. The homogeneous color cannot be a set $O\in a$, since then the unique points in $\dom(q_i)\cap O$ for $i\in b$ would form a $\Gamma$-clique of cardinality $m$, contradicting the initial choice of $m$. Thus, the homogeneous color is $!$ and the set $\{q_i\colon i\in b\}$ consists of pairwise compatible conditions. Then $\bigcup_{i\in b}q_i$ is a common lower bound of this set, confirming Ramsey-centeredness of the set $A$.
\end{proof}

\section{Wrapping up}
\label{wrapupsection}

The theorems in the introduction are now obtained via the technologies of \cite[Chapter 11]{z:geometric}.

\begin{definition}
\label{liminfcontroldefinition}
Let $P$ be a Suslin poset. Say that $P$ has \emph{liminf control} if provably in ZFC, there is a definable map which assigns to each condition $p\in P$ objects $R, \pi, Q, \tau$ so that

\begin{enumerate}
\item $R$ is a forcing and $Q, \tau$ are $R$-names;
\item $\pi\colon R\to\mathrm{Ord}$ is a function such that preimages of singletons are liminf-centered;
\item $R\Vdash \langle Q, \tau\rangle$ is a balanced pair in $P$ and $Q\Vdash\tau\leq \check p$.
\end{enumerate}
\end{definition}

\noindent In other words, there may not be a definable balanced pair for $P$, but there is a definable and suitably centered way of forcing such a pair. The following is a complementary concept from descriptive set theory.

\begin{definition}
\textnormal{\cite[Definition 11.1.5]{z:geometric}}
An analytic graph on a Polish space $X$ has Borel $\gs$-finite clique number if there are Borel sets $B_n\subset X$ for $n\in\gw$ such that $\bigcup_nB_n=X$ and no set $B_n$ contains an infinite clique.
\end{definition}

\noindent A good example of a Borel graph with uncountable Borel $\gs$-finite clique number is the Hamming graph $\mathbb{H}_\gw$ of infinite breadth from Definition~\ref{hammingdefinition}. The following is proved in \cite[Theorem 11.7.5]{z:geometric}.

\begin{fact}
\label{liminffact}
Suppose that $P$ is balanced and has liminf control. Suppose that $\Delta$ is an analytic graph with uncountable Borel $\gs$-finite clique number. Let $\kappa$ be an inaccessible cardinal and $W$ be the derived choiceless Solovay model. In the $P$-extension of $W$, $\Delta$ has uncountable chromatic number.
\end{fact}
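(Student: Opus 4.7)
The plan is a standard ``balanced poset over the Solovay model'' argument along the lines of \cite[Chapter 11]{z:geometric}, with liminf control providing the mechanism to amalgamate infinitely many balanced conditions. The aim is to convert a putative countable $\Delta$-coloring in the $P$-extension of $W$ into a countable analytic cover of $X$ by pieces without infinite $\Delta$-cliques, which after analytic separation yields a countable Borel cover with the same property, contradicting the hypothesis on $\Delta$.

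Suppose for contradiction that some $p \in P$ forces $\dot c$ to be a total $\Delta$-coloring by countably many colors. Since the coloring lives in a $P$-extension of the Solovay model $W$, the name $\dot c$ is definable from a real and ordinal parameters in $V$. Apply the definable liminf-control map to $p$ to obtain objects $R, \pi, Q, \tau$. For each $x \in X$ and each $R$-generic filter $g \subseteq R$ over $V$, the pair $(Q/g, \tau/g)$ is a balanced pair in $V[g]$ below $p$, and by the standard feature of balanced forcing over the Solovay model, any $P$-condition extending $\tau/g$ in any further generic extension must force $\dot c(x)$ to one and the same natural number $n(x, g)$. Define
\[
B_n = \{x \in X : \exists r \in R \ r \Vdash_R n(x, \dot g) = n\}.
\]
By Suslinness of $P$ together with the definability of the liminf-control assignment and of the Solovay name for $\dot c$, each set $B_n$ is analytic, and these sets cover $X$. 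Analytic separation (using that an analytic set without infinite $\Delta$-clique is contained in a Borel set without infinite $\Delta$-clique) upgrades them to a countable Borel cover with the same infinite-clique content.

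By the hypothesis that $\Delta$ has uncountable Borel $\sigma$-finite clique number, some $B_n$ contains an infinite $\Delta$-clique $\{x_i : i \in \omega\}$. Fix witnessing conditions $r_i \in R$ with $r_i \Vdash_R n(x_i, \dot g) = n$, and by pigeonhole on the ordinal values $\pi(r_i)$, thin the sequence so that all $r_i$ lie in a common fiber $\pi^{-1}(\alpha)$. Liminf-centeredness of $\pi^{-1}(\alpha)$ supplies an $R$-condition $r$ forcing the generic filter $\dot g$ to meet $\{r_i : i \in \omega\}$ in an infinite set; for any $g \subseteq R$ generic through $r$, the balanced pair $(Q/g, \tau/g)$ simultaneously decides $\dot c(x_i) = n$ for infinitely many $i$. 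Passing through a $Q/g$-generic therefore produces an actual $P$-condition below $p$ which forces two $\Delta$-adjacent vertices of the clique to share the color $n$, contradicting the assumption that $\dot c$ is a $\Delta$-coloring. The main technical obstacle is verifying the canonical-decision property $(Q/g, \tau/g) \Vdash \dot c(x) = n(x, g)$ and the attendant bookkeeping that renders $n(x, \cdot)$ sufficiently definable for the sets $B_n$ to be analytic; once this is in place, the pigeonhole-plus-liminf-centeredness step is short and clean.
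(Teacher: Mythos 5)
Note first that the paper does not actually prove this Fact---it is quoted from \cite[Theorem 11.7.5]{z:geometric}---so there is no in-paper argument to compare with; your proposal has to stand on its own, and as written it has a genuine gap at its crucial step.

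The gap is the pigeonhole you invoke on the values $\pi(r_i)$. The control map $\pi$ goes into $\mathrm{Ord}$, not into $\gw$, and in the intended application (Proposition~\ref{liminfcontrolprop}) $R$ is a finite support iteration of length $\gw_1$, so the fibers $\pi^{-1}(\alpha)$ are genuinely uncountable in number: a countable sequence of witnesses $r_i$ may well have pairwise distinct $\pi$-values, and there is no infinite subsequence lying in a single fiber. Since liminf-centeredness is a property of each fiber separately, without all (infinitely many) witnesses in one fiber you cannot produce a condition forcing infinitely many $r_i$ into the generic filter, and the amalgamation collapses. The way you defined $B_n=\{x\colon\exists r\in R\ r\Vdash n(x,\dot g)=n\}$ is exactly what severs the tie between the clique and a single liminf-centered set; if instead you index the cover by pairs (fiber, color), i.e.\ $B_{\alpha,n}=\{x\colon\exists r\in\pi^{-1}(\alpha)\ r\Vdash n(x,\dot g)=n\}$, the liminf step works but the cover is no longer countable, so the hypothesis of uncountable Borel $\gs$-finite clique number (which only speaks about countable Borel covers) gives no infinite clique inside any single piece. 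Resolving this tension is precisely the content of the cited theorem; it is not a one-line pigeonhole, and this is why ``liminf control'' is formulated with an ordinal-valued $\pi$ rather than with a single liminf-centered dense set.

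Two smaller points of bookkeeping, which you partly acknowledge but which are not quite right as stated: the name $\dot c$ is definable in $W$ from a real parameter that need not lie in $V$, so the decision property of the balanced pair $\langle Q,\tau\rangle/g$ applies only after passing to an intermediate extension $V[K]$ containing that real, and only to points $x$ of the model over which the pair is balanced---not to arbitrary $x\in X$ as your definition of $B_n$ suggests. Consequently your sets cover only $X\cap V[K]$, and the appeal to the hypothesis has to be routed through an absoluteness argument (the statement ``every countable Borel cover has a piece with an infinite clique'' is $\mathbf{\Pi}^1_2$, so Shoenfield suffices). Your reflection step (an analytic set with no infinite $\Delta$-clique sits inside a Borel set with no infinite $\Delta$-clique, by the first reflection theorem) is fine; the fatal problem remains the fiber/pigeonhole step above.
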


\begin{proposition}
\label{liminfcontrolprop}
Let $\Gamma$ be a closed Noetherian graph on a $\gs$-compact Polish space $X$ without infinite cliques. Then the coloring poset $P_\Gamma$ has liminf control.
\end{proposition}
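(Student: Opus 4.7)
The plan is to parallel Proposition~\ref{balanceproposition}, but to produce the total $\Gamma$-coloring that serves as the balanced condition by forcing with a variant of the $\Gamma$-control poset of Section~\ref{controlsection}, in a way whose combinatorics are $\gs$-liminf-centered. Fix $p\in P_\Gamma$, and let $R=R_p$ be the poset of finite partial functions $q$ with $\dom(q)\subset X\setminus\dom(p)$ assigning to each $x\in\dom(q)$ a basic open neighborhood $q(x)\subset X$ such that $q(x)\cap\dom(p)\cap\Gamma(x)=\emptyset$ and such that $q$ is itself a $\Gamma$-coloring. Order $R_p$ by reverse inclusion. Define $\pi\colon R_p\to\mathrm{Ord}$ via an ordinal encoding of \emph{location}: the location of $q$ is the pair $\langle a,g\rangle$ in which $a$ is a canonically chosen finite collection of pairwise disjoint basic open subsets of $X$ witnessing $\dom(q)$ (exactly one point of $\dom(q)$ in each $O\in a$) and $g\colon a\to\{\text{basic open subsets of }X\}$ is given by $g(O)=q(x)$ for the unique $x\in O\cap\dom(q)$, subject to the additional requirements $O\subseteq g(O)$, $g(O)\cap\dom(p)\cap\bigcup_{y\in O}\Gamma(y)=\emptyset$, and $(O_0\times O_1)\cap\Gamma=\emptyset$ whenever $g(O_0)=g(O_1)$ for distinct $O_0,O_1\in a$. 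The last two clauses, built into the location, ensure that the avoidance and $\Gamma$-coloring requirements on $q$ are consequences of the location alone, independently of which selector from $a$ is chosen.

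The verification that every preimage $\pi^{-1}(\langle a,g\rangle)$ is liminf-centered proceeds by the argument of Theorem~\ref{liminfcenteredtheorem} essentially verbatim: given an infinite family $\{q_n\colon n\in\gw\}$ at the location $\langle a,g\rangle$, partition $a=a_0\cup a_1$ so that $q_n(O)$ stabilizes on $a_0$ and varies injectively on $a_1$, then apply the Claim from the proof of Theorem~\ref{liminfcenteredtheorem} (the one using the Noetherian topology together with the absence of infinite cliques) to thin the family so that, for each $O\in a_1$, every point of $X$ is $\Gamma$-connected to either finitely many or all of $\{q_n(O)\colon n\in\gw\}$. Then $q_0$ forces infinitely many of the $q_n$ into the generic filter, by precisely the compatibility analysis given there. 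A routine density argument using Proposition~\ref{goodproposition} and the closedness of $\Gamma$ shows that $e=\bigcup G_{R_p}$ has domain $X\setminus\dom(p)$, so $c_0:=p\cup e$ is a $\Gamma$-coloring of $X$ (in the sense of $V^{R_p}$) satisfying the conclusion of the first paragraph of the proof of Proposition~\ref{balanceproposition}.

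It remains to produce the balanced pair in $V^{R_p}$. Working inside $V^{R_p}$, apply the proof of Theorem~\ref{chromatictheorem} to extend $c_0$ to a total $\Gamma$-coloring $c$ of $X^{V^{R_p}}$ whose new values avoid $\dom(p)$-neighbors; this is possible because closedness, Noetherianness, and absence of uncountable cliques of $\Gamma$ are $\Pi^1_1$-absolute, hence preserved in $V^{R_p}$. Let $Q=\Coll(\gw,\mathbb{R})$ in the sense of $V^{R_p}$, and let $\tau$ be the check-name of $c$. Then $Q\Vdash\tau\leq\check p$ by construction of $c$, and the second paragraph of the proof of Proposition~\ref{balanceproposition}, transposed from $V$ to $V^{R_p}$ (with its mutually Noetherian pair now taken over $V^{R_p}$), shows that $\langle Q,\tau\rangle$ is Noetherian balanced in $V^{R_p}$. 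The assignment $p\mapsto\langle R_p,\pi,Q,\tau\rangle$ is clearly definable from $p$ and the code of $\Gamma$, so the liminf control of $P_\Gamma$ follows.

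The main obstacle is the combinatorial adaptation of Theorem~\ref{liminfcenteredtheorem} from $\gw$-valued colorings to basic-open-set-valued colorings with the per-condition avoidance constraint; the trick is to fold the avoidance constraint into the location definition so that conditions at a given location constitute a genuine sub-poset whose centering is governed solely by the Noetherian combinatorics. Once this is set up correctly, the remaining steps—density for totality of $e$, internal use of the chromatic theorem inside $V^{R_p}$, and the Noetherian balance verification—are routine transpositions of arguments already present in Sections~\ref{balancedsection} and \ref{controlsection}.
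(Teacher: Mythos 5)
There is a genuine gap, and it sits exactly where Definition~\ref{liminfcontroldefinition} puts all the weight: the whole quadruple $\langle R,\pi,Q,\tau\rangle$ must be produced by a \emph{definable} map from $p$ (and the code of $\Gamma$). Your poset $R_p$ is a nontrivial finite-condition c.c.c.\ forcing, so it adds new reals and hence new points of the uncountable Polish space $X$; the union $e$ of its generic filter colors only the ground-model points, and you repair this by running the proof of Theorem~\ref{chromatictheorem} inside $V^{R_p}$ to extend $c_0$ to a total coloring $c$ of $X^{V^{R_p}}$. That proof is a transfinite recursion along a well-ordering with arbitrary choices at each stage; there is no canonical such extension definable from the $R_p$-generic and the allowed parameters, so ``the check-name of $c$'' is not a definable $R_p$-name and the required map $p\mapsto\tau$ has not been exhibited. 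Nor can the extension be delegated to $Q=\Coll(\gw,\mathbb{R})$: if $\tau$ genuinely depended on the collapse generic, the realizations $\tau/G_0$ and $\tau/G_1$ would be different, generally incompatible total colorings, and the balance requirement (amalgamating arbitrary $p_0\leq\tau/G_0$ and $p_1\leq\tau/G_1$) would fail; the argument of Proposition~\ref{balanceproposition} needs one fixed total coloring of the entire space of the model over which $Q$ is forced. This is precisely why the paper does not force with a single copy of a control poset: it takes $R$ to be the finite support iteration of $Q_\Gamma$ of length $\gw_1$ (definably liminf-centered by \cite[Proposition 11.7.3]{z:geometric}), so that by c.c.c.\ every point of $X$ of the $R$-extension appears in some intermediate model $M_\ga$ and is colored by the stage-$\ga$ generic coloring $c_\ga$; the numeric colors are then converted into basic open values definably, using a fixed partition of the basis into countably many sub-bases $B_n$ together with Proposition~\ref{goodproposition} applied to the $\Gamma$-good sets $\dom(p)\cup\bigcup_{\gb\in\ga}(X\cap M_\gb)$. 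That stitching of the stagewise generic colorings into a single definable name for a total coloring is the actual content of the paper's proof, and it is the step your construction cannot reach.

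A secondary, repairable point: folding the avoidance constraint into the location via the clause $g(O)\cap\dom(p)\cap\bigcup_{y\in O}\Gamma(y)=\emptyset$ may leave some conditions of $R_p$ with no location at all, since the set of points $y$ with $g(O)\cap\dom(p)\cap\Gamma(y)\neq\emptyset$ is only a countable union of closed sets and need not miss any neighborhood of the chosen point of $O$; then your centered classes need not cover $R_p$. This is harmless if you drop that clause from the location (the argument of Theorem~\ref{liminfcenteredtheorem} only needs the location to fix the disjoint neighborhoods and the values, because the conditions being amalgamated satisfy the avoidance requirement individually), but the definability gap described above remains.
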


\begin{proof}
Let $p\in P_\Gamma$ be a condition. Let $R$ be the finite support iteration of the control poset $Q_\Gamma$ of Definition~\ref{controldefinition} of length $\gw_1$. Since the control poset is Suslin-$\gs$-liminf-centered by Theorem~\ref{liminfcenteredtheorem}, the iteration $R$ is definable and definably liminf-centered as in Definition~\ref{liminfcontroldefinition} as proved in \cite[Proposition 11.7.3]{z:geometric} I will produce a definition of an $R$-name for a total coloring of $X$ stronger than $p$. This will prove the proposition, since by the (proof of) Proposition~\ref{balanceproposition} such a coloring defines a balanced pair.

First, note that the iteration $R$ produces a transfinite sequence $\langle M_\ga\colon\ga\in\gw_1\rangle$ of forcing extensions, where we put $M_0$ to be the ground model. By a c.c.c.\ argument, the space $X$ in the $R$-extension is a subset of the union $\bigcup_\ga M_\ga$. The iteration $R$  also produces a transfinite sequence $\langle c_\ga\colon \ga\in\gw_1\rangle$ of partial $\Gamma$-colorings. Namely, each coloring $c_\ga$ is the union of the generic filter on the $\ga$-th iterand of $R$, and as such its domain is exactly $X\cap M_\ga$. The colorings do not cohere in any way. It is necessary to stitch them together in a definable way to produce a total coloring of the space $X$ in the $R$-extension.

Let $B$ be a basis of the space $X$, and let $B=\bigcup_nB_n$ be a partition of $B$ into countably many subbases. Define the coloring $c$ by the following description. If $x\in\dom(p)$ then $c(x)=p(x)$. If $x\notin\dom(p)$, then let $\ga\in\gw_1$ be the smallest ordinal such that $x\in M_\ga$ holds, and let $c(x)$ be the first (in some fixed enumeration) element of the basis $B_{c_\ga(x)}$ which contains no points in the set $\dom(p)\cup\bigcup_{\gb\in\ga}M_\gb$ which are $\Gamma$-connected to $x$. Such a set exists by Proposition~\ref{goodproposition}. It is important to check the assumptions of that proposition, i.e.\ the set $C_\ga=X\cap(\dom(p)\cup\bigcup_{\gb\in\ga}M_\gb)$ should be $\Gamma$-good. To see this, if $\ga=0$ it follows from the fact that $\dom(p)$ is $\Gamma$-good. If $\ga$ is a successor of some ordinal $\gb$, then it follows from a Mostowski absoluteness argument for the model $M_\gb$. Finally, if the ordinal $\ga$ is limit then $C_\ga$ is the increasing union of $\Gamma$-good sets $C_\gb$ for $\gb\in\ga$ and therefore $\Gamma$-good as well.

Now, work to verify that the map $c$ is a $\Gamma$-coloring. To see that, suppose first that $x\neq y$ are $\Gamma$-connected points in the domain of $c$, and work to show that they receive different colors. Let $\ga_x, \ga_y$ be the smallest ordinals such that $x\in M_{\ga_x}$ and $y\in M_{\ga_y}$. If the two ordinals are distinct (say $\ga_y\in\ga_x$) then the color $c(y)$ is an open set containing $y$, while the color $c(x)$ is an open set containing no elements of the model $M_{\ga_y}$ connected to $x$, in particular $y\notin c(x)$ and $c(x)\neq c(y)$. If the two ordinals $\ga_x, \ga_y$ are equal to some $\ga$ then $c_\ga(x)\neq c_\ga(y)$ and therefore $c(x)\neq c(y)$. As a special case, if $\ga=0$ and one of the points (say $y$) belongs to $\dom(p)$, then either $x\in\dom (p)$ and $c(x)=p(x)\neq p(y)=c(y)$, or $x\notin\dom(p)$ and then $c(x)$ is an open set containing no elements of $\dom(p)$ connected to $x$, in particular $y\notin c(x)$ and $c(x)\neq c(y)$ again.

Finally, it is clear that the collapse of the continuum forces $c\leq p$ by the definition of the map $c$. The proof of the proposition is complete.
\end{proof}

The story is entirely parallel in the case of Ramsey-$\gs$-centered posets, except the eventual conclusion is stronger.

\begin{definition}
\label{ramseycontroldefinition}
Let $P$ be a Suslin poset. Say that $P$ has \emph{Ramsey control} if provably in ZFC, there is a definable map which assigns to each condition $p\in P$ objects $R, \pi, Q, \tau$ so that

\begin{enumerate}
\item $R$ is a forcing and $Q, \tau$ are $R$-names;
\item $\pi\colon R\to\mathrm{Ord}$ is a function such that preimages of singletons are Ramsey-centered;
\item $R\Vdash \langle Q, \tau\rangle$ is a balanced pair in $P$ and $Q\Vdash\tau\leq \check p$.
\end{enumerate}
\end{definition}

\noindent In other words, there may not be a definable balanced pair for $P$, but there is a definable and suitably centered way of forcing such a pair. The following is a complementary concept from descriptive set theory.

\begin{definition}
An analytic graph $\Delta$ on a Polish space $X$ has \emph{countable Borel $\gs$-bounded chromatic number} if there are Borel sets $B_n\subset X$ for $n\in\gw$ such that $\bigcup_nB_n=X$ and $\Delta$ restricted to every finite subset of $B_n$ has chromatic number at most $n+2$.
\end{definition}

\noindent A good example of a Borel graph with uncountable Borel $\gs$-bounded chromatic number is the diagonal Hamming graph $\mathbb{H}_{<\gw}$ of Definition~\ref{hammingdefinition}.  The following is proved in \cite[Theorem 11.6.5]{z:geometric}:

\begin{fact}
\label{rfact}
Suppose that $P$ has Ramsey control.  Let $\Delta$ be an analytic graph on a Polish space with uncountable Borel $\gs$-bounded chromatic number. Let $\kappa$ be an inaccessible cardinal and $W$ be the derived choiceless Solovay model. In the $P$-extension of $W$, $\Delta$ has uncountable chromatic number.
\end{fact}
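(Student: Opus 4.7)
The plan follows the pattern of \cite[Chapter 11]{z:geometric}: argue by contradiction, extract a canonical color function from the Ramsey-controlled structure, and convert this into a ground-model Borel $\gs$-bounded chromatic decomposition of $\Delta$, contradicting the hypothesis.

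Suppose some condition $p\in P$ forces that $\dot c$ is a proper $\Delta$-coloring of $X$ with countably many colors in the $P$-extension of $W$. Invoke Ramsey control at $p$ to obtain the definable quadruple $\langle R,\pi,Q,\tau\rangle$, where $R$ forces $\langle Q,\tau\rangle$ to be a balanced pair below $\check p$ and the preimages $\pi^{-1}(\alpha)$ are Ramsey-centered. For each $x\in X$ and each $k\in\gw$, let $D_{x,k}\subseteq R$ be the set of conditions whose associated balanced pair forces $\dot c(\check x)=\check k$. Balance of $\langle Q,\tau\rangle$, in the spirit of Proposition~\ref{balanceproposition}, implies that conditions in $D_{x,k}$ and $D_{x,k'}$ for $k\neq k'$ are incompatible in $R$; this pins down, in the ground model, a canonical color $k_x\in\gw$ for each $x\in X$. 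Because the forced coloring is proper, each class $A_k=\{x\in X\colon k_x=k\}$ is $\Delta$-independent.

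The crucial step is to verify that each $A_k$ is Borel. A bare balanced analysis places $A_k$ only in the analytic pointclass. The leverage comes from Ramsey centeredness: the statement ``$k_x=k$'' can be rephrased using Ramsey's theorem applied inside a single analytic Ramsey-centered piece $\pi^{-1}(\alpha)$, replacing existential quantification over infinite compatible chains of deciding conditions with quantification over finite configurations of bounded size. This collapses the quantifier complexity and should allow one to express $A_k$ by a Borel condition obtained as a countable disjunction of statements about finite compatible tuples, rather than via an unrestricted analytic quantifier.

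With each $A_k$ Borel and $\Delta$-independent, the partition $X=\bigcup_kA_k$ witnesses that $\Delta$ has countable Borel $\gs$-bounded chromatic number in the precise sense of the hypothesis (in fact, chromatic number at most $1$ on each $A_k$, which is trivially bounded by $k+2$), contradicting the assumption that the Borel $\gs$-bounded chromatic number of $\Delta$ is uncountable. The principal obstacle is the analytic-to-Borel collapse for the color classes: this is the exact place where Ramsey centeredness, as opposed to mere balance or liminf centeredness, is doing work.
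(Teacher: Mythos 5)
This Fact is not proved in the paper at all: it is imported verbatim from \cite[Theorem 11.6.5]{z:geometric}, so your proposal has to be judged against the general machinery it invokes, and as it stands it has a genuine gap at its central step. You claim that balance ``pins down, in the ground model, a canonical color $k_x$'' because conditions in $D_{x,k}$ and $D_{x,k'}$ are incompatible in $R$. Pairwise incompatibility of such conditions does not make the color well defined: distinct, mutually incompatible conditions of $R$ generate genuinely different balanced virtual conditions (think of the coloring poset $P_\Gamma$, where different total colorings $c$ give different balanced pairs), and these will in general force different values of $\dot c(\check x)$ for the same $x$. So there is no single $k_x$, only a family of pairs (witnessing condition in $R$, color). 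For the same reason your independence claim for $A_k$ fails: if $x\mathrel\Delta y$ both ``receive'' color $k$ via incompatible witnesses $r_x, r_y\in R$, no amalgamation is available and no contradiction with $\dot c$ being a coloring arises. Balance only lets you amalgamate conditions lying below one and the same balanced condition (in mutually Noetherian/generic extensions), which is exactly why the control apparatus carries the map $\pi\colon R\to\mathrm{Ord}$ with Ramsey-centered fibers: points must be stratified by the fiber $\pi^{-1}(\ga)$ of a witness together with the color, and Ramsey centeredness is then used to amalgamate, out of any sufficiently large finite family of witnesses inside one fiber, a subfamily of prescribed size with a common lower bound, forcing the corresponding points to get the same color and hence to be $\Delta$-independent. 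This yields only the ``every finite subset has chromatic number at most $n+2$'' property on each piece --- precisely the Borel $\gs$-bounded chromatic number notion --- not full independence; your stronger conclusion (a countable Borel proper coloring) is more than the hypotheses can deliver and signals that the argument is amalgamating across witnesses that cannot be amalgamated.

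Two further points would still need attention even after repairing the above. First, the fibers $\pi^{-1}(\ga)$ range over ordinals, so the induced decomposition is a priori into uncountably many pieces, while the definition of Borel $\gs$-bounded chromatic number demands countably many Borel sets; the cited proof has to deal with this (and with the fact that $\dot c$ is a name in the Solovay model, so ``forces $\dot c(\check x)=\check k$'' must be interpreted via the standard balanced-forcing analysis in an intermediate model $V[z]$ and transferred by absoluteness), and your sketch does not address either issue. Second, your ``analytic-to-Borel collapse'' step is asserted rather than argued; in the actual scheme the Borel (indeed, definable-from-names) character of the pieces comes from the definability of the control assignment and of the name $\dot c$, not from a quantifier-complexity collapse via Ramsey's theorem, and the role of Ramsey centeredness is the finite amalgamation described above rather than definability.
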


\begin{proposition}
\label{ramseycontrolprop}
Let $\Gamma$ be a closed Noetherian graph on a $\gs$-compact Polish space $X$. Suppose that there is a number $m$ such that $\Gamma$ contains no cliques of cardinality $m$. Then the coloring poset $P_\Gamma$ has Ramsey control.
\end{proposition}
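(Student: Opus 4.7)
The plan is to follow the recipe of the preceding Proposition~\ref{liminfcontrolprop} verbatim, simply replacing the word ``liminf'' with ``Ramsey'' at the one place where the centeredness of the control poset enters. Given a condition $p\in P_\Gamma$, I take $R$ to be the finite support iteration of the $\Gamma$-control poset $Q_\Gamma$ of Definition~\ref{controldefinition} of length $\gw_1$. The hypothesis that $\Gamma$ omits a clique of size $m$ is exactly the input to Theorem~\ref{ramseycenteredtheorem}, which delivers that $Q_\Gamma$ is Suslin-$\gs$-Ramsey-centered. The general finite-support iteration lemma in \cite[Chapter 11]{z:geometric} (the Ramsey analogue of the fact cited in the liminf case) then yields that $R$ itself is definably Ramsey-centered in the sense required by Definition~\ref{ramseycontroldefinition}; that is, there is a definable function $\pi\colon R\to\ord$ whose fibers are Ramsey-centered.

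Once the forcing $R$ is in place, the construction of the $R$-name $\tau$ for a total $\Gamma$-coloring extending $p$ is literally the one given in the proof of Proposition~\ref{liminfcontrolprop}. One uses the iteration to produce a chain $\langle M_\ga\colon \ga\in\gw_1\rangle$ of forcing extensions and generic partial colorings $c_\ga$ whose domains are $X\cap M_\ga$; then one fixes a partition of a basis of $X$ into countably many subbases $B_n$, and defines $c(x)=p(x)$ for $x\in\dom(p)$, while for $x\notin\dom(p)$ one lets $\ga$ be the least ordinal with $x\in M_\ga$ and declares $c(x)$ to be the first element of $B_{c_\ga(x)}$ that contains no point of $\dom(p)\cup\bigcup_{\gb\in\ga}M_\gb$ which is $\Gamma$-connected to $x$. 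Existence of such a basic set is guaranteed by Proposition~\ref{goodproposition}, whose hypothesis is verified inductively: the set $\dom(p)\cup\bigcup_{\gb\in\ga}M_\gb$ is $\Gamma$-good by transfinite induction, using Mostowski absoluteness at successor stages and the fact that increasing unions of $\Gamma$-good sets are $\Gamma$-good at limit stages.

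The verification that $c$ is a $\Gamma$-coloring is the same case split on the ordinals $\ga_x, \ga_y$ associated with two $\Gamma$-connected points $x, y\in\dom(c)$ as in Proposition~\ref{liminfcontrolprop}: if the two ordinals differ, the open neighborhood assigned to the later point excludes all earlier points connected to it; if they agree, the colorings $c_\ga$ separate them. That $c$ defines a balanced pair below $p$ follows from the proof of Proposition~\ref{balanceproposition}, since $c$ is a total coloring extending $p$ whose non-$p$ values are basic open neighborhoods disjoint from the $\Gamma$-neighbors of $\dom(p)$.

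There is essentially no obstacle beyond citing the correct ingredients; the only point that merits a sentence of care is pointing to the Ramsey analogue of \cite[Proposition 11.7.3]{z:geometric} (the preservation of Suslin-$\gs$-Ramsey-centeredness under finite-support iterations of length $\gw_1$), which is what allows the iteration $R$ to witness Ramsey control in the sense of Definition~\ref{ramseycontroldefinition}. Everything else is parallel transport of the liminf proof.
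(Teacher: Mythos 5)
Your proposal is correct and matches the paper's own argument, which likewise declares the proof a verbatim repetition of Proposition~\ref{liminfcontrolprop} with Theorem~\ref{liminfcenteredtheorem} and the liminf iteration lemma replaced by Theorem~\ref{ramseycenteredtheorem} and its Ramsey-centered iteration analogue. You also correctly retain the use of Proposition~\ref{goodproposition} (hence the Noetherian hypothesis) at the stitching step, which is exactly the point the paper flags as still necessary even though Ramsey-centeredness of the control poset does not need it.
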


\noindent The proof is a verbatim repetition of the proof of Proposition~\ref{liminfcontrolprop} with references to Theorem~\ref{liminfcenteredtheorem} and \cite[Proposition 11.7.3]{z:geometric} replaced with Theorem~\ref{ramseycenteredtheorem} and \cite[Proposition 11.6.3]{z:geometric}. It should be noted that while the Suslin-$\gs$-Ramsey-centeredness of the control poset in this case does not need the Noetherian assumption, the proof of Proposition~\ref{ramseycontrolprop} does need it at the place where the transfinite sequence of colorings is stitched into a single one.

Finally, it is possible to state the proofs of the main theorems of the introduction in their entirety. For Theorem~\ref{maintheorem}(1), let $\Gamma$ be a closed Noetherian graph on a $\gs$-compact Polish space. Let $\kappa$ be an inaccessible cardinal, let $W$ be the choiceless Solovay model associated with $\kappa$, and let $W[G]$ be a generic extension of $W$ obtained by the coloring poset $P_\Gamma$ of Definition~\ref{coloringposetdefinition}. The poset is balanced and it has liminf-centered control by Proposition~\ref{liminfcontrolprop}. By Fact~\ref{liminffact}, in the model $W[G]$, the chromatic number of the Hamming graph $\mathbb{H}_\gw$ of infinite breadth is uncountable, and Theorem~\ref{maintheorem}(1) follows. If there is a finite bound on the size of $\Gamma$-cliques, then the coloring poset $P_\Gamma$ has Ramsey control by Proposition~\ref{ramseycontrolprop}. By Fact~\ref{rfact}, in the model $W[G]$, the chromatic number of the diagonal Hamming graph $\mathbb{H}_{<\gw}$ of infinite breadth is uncountable, proving Theorem~\ref{maintheorem}(2).

\bibliographystyle{plain}
\bibliography{odkazy,shelah, zapletal}

\end{document}